\newcommand{\R}{\mathbb R}
\newcommand{\vr}{\vec{r}}
\newcommand{\vp}{\vec{p}}
\newcommand{\vq}{\vec{q}}
\newtheorem{theorem}{Theorem}[section]
\newtheorem{lemma}{Lemma}[section]
\newtheorem{proposition}{Proposition}[section]
\newtheorem{cor}{Corollary}[section]
\newtheorem{remark}{Remark}[section]
\newtheorem{defi}{Definition}[section]
\newtheorem{acknowledgment*}{Acknowledgment}
\newtheorem{assumption}{Assumption}[section]
\newcommand{\be}{\begin{equation}}
\newcommand{\ee}{\end{equation}}
\begin{document}






\huge
\begin{center}{\bf Semi-Discrete approximation of  Optimal  Mass Transport}\end{center}
\normalsize
\begin{center}{ G. Wolansky, \  \\ Department of Mathematics, Technion, Haifa 32000, Israel \footnote{Email: \  gershonw@math.tecnion.ac.il}}\end{center}
\begin{abstract}
 Optimal mass transport is described by an approximation of transport cost  via  semi-discrete  costs. The notions of optimal partition and optimal strong partition are given  as well.  We also suggest an algorithm for computation of Optimal Transport for general cost functions induced by an action, an asymptotic error estimate and several numerical examples of optimal partitions.
\end{abstract}
\section{Introduction}
   Optimal mass transport (OMT) goes back to the pioneering paper of Monge [\ref{mon}] at the 18th century.
  In 1942, L. Kantorovich [\ref{[Kan]}] observed that   OMT  can be relaxed into an infinite dimensional linear programming in measure spaces. As such ,  it  has a dual formulation which is very powerful and  was later (1987) used by Brenier [\ref{[Br]}] to develop the theory of Polar factorization of positive  measures.
    OMT has many connections with PDE, kinetic theory, fluid dynamics, geometric inequalities, probability and many other fields in mathematics as well as in computer science and economy.

Even though finite dimensional (or discrete) OMT  is well understood, its extension to infinite dimensional measure spaces  poses a great challenge, e.g. uniqueness and  regularity theory of fully non-linear PDE such as the Monge-Amper equation [\ref{Cap1}].
  \par
  We suggest to investigate a bridge between finite  ("discrete") and infinite ("continuum") dimensional OMT. This notion of {\it semi-discrete} OMT  leads naturally to {\it optimal partition} of measure spaces. Our motivation in this paper  is the development of numerical method for solving OMT.   Efficient algorithms  are of great interest  to  many fields in operational research and, recently, also for optical design [\ref{GO}, \ref{Ru1}, \ref{Ru2}] and computer vision ("earth moving metric") [\ref{[RYT]}].

When dealing with numerical approximations for OMT, the problem must be reduced to a discrete, finite   OMT (with, perhaps, very large number of degrees of freedom). Discrete OMT   is often called the {\it assignment problem}.  This is, in fact,  a general title for a variety of linear and quadratic programming. It seems that the first efficient algorithm was the so called "Hungarian Algorithm", after two Hungarian mathematicians. See [\ref{[HWK]}, \ref{[VO]}, \ref{[HWK1]}, \ref{[AF]}, \ref{[JM]}]
and the survey paper  [\ref{[PD]}] for many other relevant references.
\par
The deterministic, finite assignment problem is easy to formulate.  We are given $n$ men and $n$ women. The cost of matching  man $i$ to a woman $j$ is  $c_{i,j}$. The object is to find the assignment (matching) $i\rightarrow j$, given in terms of a permutation $j=\tau(i)$ which minimize the total cost of matching $\sum_{i=1}^nc_{i, \tau(i)}$.
\par
When replacing the deterministic assignment by a probabilistic one, we assign the probability   $p_i^j\geq 0$ for matching man $i$ to woman $j$.  The discrete assignment problem is then  reduced to the linear programming of minimizing \be\label{dismonge}\sum_{i=1}^n\sum_{j=1}^n p_i^j c_{i,j}\ee over all stochastic $n\times n$ matrices $P:=\{p_i^j\}$, i.e. these matrices which  satisfy the $2n+n^2$ linear constraints $$\sum_{k=1}^n p_k^j=\sum_{k=1}^np_i^k=1 \ \ ; \ \ p_i^j\geq 0 \  \ \forall \ i,j\in \{1, \ldots n\} \ .  $$  The  Birkhoff Theorem assures us, to our advantage,  that the optimal solution of this continuous assignment problem is also the solution of the deterministic version.
\par
The probabilistic version seems to be more difficult since it involves a search on a much larger set of $n\times n$ stochastic matrices.  On the other hand, it  has a clear advantage since  it is, in fact, a linear programming which can be handled effectively by well developed algorithms for such problems.
\par
In many cases the probabilistic version cannot be reduced to the deterministic problem. For example, if the number of sources   $n$ and number of targets  $m$ not necessarily  equal, or when  not all sources must find  target, and/or not all targets  must be met, then the constraints are relaxed into $\sum_{i=1}^n p_i^j\leq 1$ and/or $\sum_{i=1}^mp_j^i\leq 1$. We shall not deal with these extension in the current paper, except, to some extent, in section \ref{hedonic} below.
\subsection{ From the discrete assignment problem  to the continuum OMT}
Let $\mu$ be a probability measure on some measure space $X$, and $\nu$ another probability measure on (possibly different) measure space $Y$. Let $c=c(x,y)$ be the cost of transporting $x$ to $y$. The object of the Monge problem is to find a measurable mapping $T:X\rightarrow Y$ which generalizes the deterministic assignment perturbation $\tau$ described above in the following sense:
\be\label{push} T_\#\mu=\nu \ \ \text{namely} \ \ \mu(T^{-1}(B))=\nu(B)\ee
for every  $\nu-$measurable set $B\subset Y$.  The optimal Monge mapping (if exists) realizes the infimum  $$ \inf_{T_\#\mu=\nu}\int_X c(x,T(x)) \mu(dx) \ . $$
The relaxation of Monge problem into Kantorovich problem is analogues to the relaxation of the deterministic assignment problem to  the probabilistic one: Find the minimizer  \be\label{infmonge} c(\mu,\nu):= \min_{\pi\in\Pi_X^Y(\mu,\nu)}\int_X\int_Y c(x,y)\pi(dxdy)\ee
 among all probability measures $\pi\in \Pi_X^Y(\mu,\nu):=$   \be\label{Pi} \{ \text{ Probability measures on}\ X\times Y \  \text{whose}  \ X \ (\text{resp.}\  Y) \ \text{marginals are} \ \mu\ (\text{resp.}\ \nu)\} \ . \ee
\par
In fact, Kantorovich problem is just an infinite dimensional linear programming over the huge set $\Pi_X^Y(\mu,\nu)$. The Monge problem can be viewed as a restriction of the Kantorovich problem to the class of {\it deterministic} probability measures in $\Pi_X^Y(\mu,\nu)$, given by $\pi(dxdy)=\mu(dx)\delta_{y-T(x)}$ where $T_\#\mu=\nu$.  It turns out, somewhat surprisingly, that the value $c(\mu,\nu)$ of the  Kantorovich problem equals to the infimum (\ref{infmonge}) of Monge problem, provided  $c$ is a continuous function on $X\times Y$ and $\mu$ does not contain a Dirac $\delta$ singularity (an atom) [\ref{[A1]}].

\subsection{Semi-finite approximation- The middle way}
Suppose the transportation cost $c=c(x,y)$ on $X\times Y$ can be obtained by interpolation  of pair of functions $c^{(1)}$ on $X\times Z$ and $c^{(2)}$ on $Z\times Y$, where $Z$ is a third domain and the interpolation means
\be\label{interpol} c(x,y):=\inf_{z\in Z}c^{(1)}(x,z)+c^{(2)}(z,y) \ . \ee
   A canonical example   for  $X=Y=\R^d$ is $c(x,y)=c(|x-y|)$ where  $c(w)=|w|^p$,  $p\geq 1$. Then  (\ref{interpol}) is valid for $Z=\R^d$ and both $c^{(1,2)}(w)=2^{p-1}|w|^p$. So
   \be\label{interpole} c(x,y):=|x-y|^p=2^{p-1}\inf_{z\in\R^d} |x-z|^p+|z-y|^p\ee
   for any $x,y\in\R^d$ provided $p\geq 1$. Note in particular that the minimizer above is unique, $z=(x+y)/2$, provided $p>1$, while $z=tx+(1-t)y$ for any $t\in[0,1]$ if $p=1$.
\par
Let $Z= Z_m:=\{z_1, \ldots z_m\}\subset Z$ is a finite set. Denote
  \be\label{sd} c^{Z_m}(x,y):= \min_{z\in Z_m}c^{(1)}(x,z)+c^{(2)}(z,y)\geq c(x,y)\ee
   {\it the ($Z_m$) semi-finite approximation} of $c$ given by (\ref{interpol}). \par
An optimal transport plan for a semi-discrete cost (\ref{sd}) is obtained as a pair of $m-${\it partitions} of the spaces $X$ and $Y$.
An $m-$partition is a decomposition of the the space into $m$ mesurable, mutually disjoint subset. It turns out that $c^{Z_m}(\mu,\nu)$ can be obtained as
\be\label{optpar} c^{Z_m}(\mu,\nu)=\inf_{\{A_z\}, \{B_z\}}\sum_{z\in Z_m}\int_{A_z}c^{(1)}(x,z)\mu(dx) + \int_{B_z}c^{(2)}(z,y)\nu(dz) \ee
where the infimum is on the pair of partitions $\{A_z\}$ of $X$ and $\{B_z\}$ of $Y$ satisfying $\mu(A_z)=\nu(B_z)$ for any $z\in Z_m$.   The optimal plan is, then, reduced to $m$ plans transporting $\bar{A}_z\subset X$ to $\bar{B}_z\subset Y$, for any $z\in Z_m$, where $\{\bar{A}_z,\bar{B}_z\}$ is the optimal partition realizing (\ref{optpar}).
\par
The real advantage of the semi-discrete method described above is that it has a dual formulation which convert the optimization (\ref{optpar}) to a convex optimization on $\R^m$. Indeed, we prove that for a given $Z_m\subset Z$  there exists a concave function $\Xi_{\mu, Z_m}^\nu:\R^m\rightarrow \R$ such that
$$ \max_{\vp\in\R^m}\Xi_{\mu, Z_m}^\nu(\vp)=c^{Z_m}(\mu,\nu) $$
and, under some conditions on either $\mu$ or $\nu$,  the maximizer is unique up to a uniform translation $\vp \rightarrow \vp+\beta(1, \ldots 1)$ on $\R^m$. Moreover,  the maximizers of $\Xi_{\mu, Z_m}^\nu$ yield the {\it unique} partitions $\{A_z, B_z; \ z\in Z_m\}$ of (\ref{optpar}).
\par

 The accuracy of the approximation of $c(x,y)$ by $c^{Z_m}(x,y)$ depends, of course, on the choice of the set $Z_m$.  In the special (but interesting) case $X=Y=Z=\R^d$ and $c(x,y)=|x-y|^\sigma$, $\sigma>1$ it can be shown that   $c^{Z_m}(x,y)-c(x,y)=O(m^{-2/d})$ for any $x,y$ in a compact set,  where $Z_m$ are  distributed on a regular grid containing this set. 
 \par
 From (\ref{sd}) and the above reasoning we obtain in particular
 \be\label{deltac} c^{Z_m}(\mu,\nu)-c(\mu,\nu)\geq 0 \  \ee
 for any pair of probability measures, and that, for a reasonable choice of $Z_m$,
  (\ref{deltac}) is of order $m^{-2/d}$ if the supports of $\mu,\nu$ are contained in a compact set.

 For a given $m\in\mathbb{N}$ and  pair of probability measures $\mu,\nu$ and ,  the optimal choice of $Z_m$ is the one which minimizes (\ref{deltac}). Let
 \be\label{cm} \phi^m(\mu,\nu):= \inf_{Z_m\subset Z}c^{Z_m}(\mu,\nu)-c(\mu,\nu) \geq 0 \  \ee
 where the infimum is over all sets of $m$ points in $Z$. Note that the optimal choice now depends on the measures $\mu,\nu$ themselves (and not only on their supports).  A natural question is then to evaluate  the assymptotic limits
 $$ \bar{\phi}(\mu,\nu):= \limsup_{m\rightarrow \infty} m^{2/d}\underline{c}^m(\mu,\nu) \ \ \ ; \ \ \  \underline{\phi}(\mu,\nu):= \liminf_{m\rightarrow \infty} m^{2/d}\underline{c}^m(\mu,\nu) \ . $$
 Some preliminary results regarding these limits are discussed in this paper.
\subsection{Numerical method}
The numerical calculation of (\ref{infmonge}) we advertise in this paper apply the semi-discrete approximation  $c^{Z_m}$  of order $m$. It also  involves  discretization  of $\mu,\nu$ into atomic measures of finite support ($n$). The level of approximation is determined by the two parameters: The cardinality of the supports of the discretized measures, $n$, and the cardinality of the semi-finite approximation $m$ of the cost. The idea of semi-discrete approximation is to  choose  $n$ much larger than $m$. As we shall see, the evaluation of the approximate solution  involves finding a maximizer to a concave function in $m$ variables, where the complexity of calculating this function, and each of its partial derivatives,  is of order $n$. A naive gradient descent method then result in $O(m)$ iterations to approximate this maximum, where each iteration is of order $mn$. This yields a complexity of order $O(m^2n)$ to obtain a transport plan on the approximation level of $m^{-2/d}$. This should be  compared to the $n^3$ complexity  of the Hungarian algorithm [\ref{PCK}]. We shall not, however, pursue a rigorous complexity estimate in this paper.

\subsection{Structure of the paper}
In section \ref{secOptpar} we consider optimal partitions in the weak sense of probability measures, as Kantorovich relaxation of  solutions of the optimal transport in semi-discrete setting. We formulate and prove a duality theorem (Theorem \ref{firstdu}) which yields the relation between the minimizer of the OMT with semi-discrete cost to maximizing  a dual function $\Xi$  of $m$ variables.
\par
In section \ref{secstrong} we define strong partitions of the domains,  and introduce conditions for the uniqueness of optimal solution and its representation as the analogue of optimal Monge mapping. The main results of this section is given in Theorem \ref{strongth}. In section \ref{hedonic} we introduce an interesting application of this concept to the theory of pricing of goods in Hedonic markets, and remark on possible generalization of optimal partitions to {\it optimal subpartition}. This model, related generalizations and further analysis will be pursued in a separate publication.
\par
In section \ref{secmonotone} we discuss optimal sampling of fixed number of centers ($m$). In particular we show a monotone sequence of improving semi-discrete approximation by floating the $m$ centers into improved positions. In section \ref{secass} we provide   some assymptotic properties of the  error of the semi-discrete approximation as $m\rightarrow\infty$.
\par
In section \ref{desal}
 we introduces a detailed description of the algorithm on the discrete level.
\par
In section \ref{yifat} we show some numerical experiments of calculating optimal partitions in the case of quadratic cost functions on a planar domain.
\par
The numerical method we propose in this paper has some common features with the approach of Merigot [\ref{Me}], see also [\ref{Levi}], as we recently discovered. We shall discuss this issues in section \ref{compare}.

\subsection{Notations and standing assumptions}
\begin{enumerate}
\item $X$, $Y$ are Polish (complete, separable)  metric spaces.
\item  ${\cal M}_+(X)$ is  the cone of non-negative Borel measures on $X$ (resp. for $Y$).
\item The $weak-*$ topology on ${\cal M}_+(X)$ is the dual of $C_b(X)$, the space of bounded continuous functions on $X$ (resp. for $Y$).
\item  ${\cal M}_1(X)$  is the cone of probability (normalized) non-negative Borel measures in ${\cal M}_+(X)$ (resp. for $Y$).
    \item For $\mu\in{\cal M}_1(X)$, $\nu\in{\cal M}_1(Y)$, \\ $\Pi_X^Y(\mu,\nu):= \{\pi\in {\cal M}_1(X\times Y) \ ; \mu \ \text{is the}
     \ X \ \text{marginal and} \ \nu \ \text{is the}
     \ Y \ \text{marginal of } \ \pi\}$
    \item The $m-$simplex $\Sigma_m:=\{\vec{s}:= (s_1, \ldots s_m), \ s_i\geq 0, \ \sum_{i=1}^m s_i=1 \}\subset \R^m$.

\end{enumerate}

\section{Optimal partitions}\label{secOptpar}

 \begin{defi}\label{defP}\noindent
 \begin{description}
 \item{i)}
 A $m-$partition of a pair of  a probability measure $\mu\in {\cal M}_1(X)$ subjected to $\vec{r}\in\Sigma_m$  is given by $m$ nonnegative measures $\mu_z\in {\cal M}_+(X)$ on $X$  such that $\sum_{z\in Z_m}\mu_z=\mu$ and $\int_Xd\mu_z=r_z$. The set of all such partitions $\vec{\mu}:= (\mu_1, \ldots \mu_m)$ is denoted by ${\cal P}_X^{\vec{r}}(\mu)$.
 \item {ii)} If, in addition, $\nu\in{\cal M}_1(Y)$ then $(\vec{\mu},\vec{\nu})\in {\cal P}_X^Y(\mu,\nu)$ iff $\vec{\mu}\in {\cal P}_X^{\vec{r}}(\mu)$ and $\vec{\nu}\in {\cal P}_Y^{\vec{r}}(\nu)$ for {\em some}  $\vec{r}\in\Sigma_m$.
 \end{description}
 \end{defi}
 The following Lemma is a result of compactness of probability Borel measure on a compact space (see e.g. [\ref{[B]}]).
 \begin{lemma}\label{comp}
 For any $\vec{r}\in\Sigma_m$, the set of partitions ${\cal P}_X^{\vec{r}}$ is compact with respect to the $(C^*)^m(X)$ topology. In addition, ${\cal P}_X^Y(\mu,\nu)$ is compact with respect to $(C^*)^m(X)\times (C^*)^m(Y)$ topology.
 \end{lemma}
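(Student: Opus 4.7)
The plan is to identify ${\cal P}_X^{\vec{r}}(\mu)$ as a closed subset of a compact set in the product weak-$*$ topology on $({\cal M}_+(X))^m$. First, I would recall that since $\mu$ is a Borel probability measure on the Polish space $X$, it is tight by Ulam's theorem. Every $\mu_z$ in a partition satisfies $0\le \mu_z\le\mu$ as measures, so the family
\[
{\cal K}:=\{\nu\in{\cal M}_+(X):\nu\le\mu\}
\]
is tight (its members are concentrated, up to $\varepsilon$, on the same compact sets that work for $\mu$) and uniformly bounded in total variation by $1$. Prokhorov's theorem then gives that ${\cal K}$ is relatively weak-$*$ compact in $C_b(X)^*$. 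In addition ${\cal K}$ is closed, since the defining inequality $\int f\,d\nu\le\int f\,d\mu$ for every nonnegative $f\in C_b(X)$ is preserved under weak-$*$ limits. Hence ${\cal K}$ is weak-$*$ compact, and by Tychonoff's theorem ${\cal K}^m$ is compact in the product weak-$*$ topology.

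Second, I would verify that ${\cal P}_X^{\vec{r}}(\mu)$ is closed inside ${\cal K}^m$. The defining constraints are: (a) $\mu_z\ge 0$, which persists under weak-$*$ limits because $\int f\,d\mu_z\ge 0$ for every nonnegative $f\in C_b(X)$; (b) $\sum_{z\in Z_m}\mu_z=\mu$, a linear condition $\sum_z\int f\,d\mu_z=\int f\,d\mu$ for every $f\in C_b(X)$, and therefore closed; (c) $\mu_z(X)=r_z$, obtained by testing against the bounded continuous function $1\in C_b(X)$. Combining (a)--(c), ${\cal P}_X^{\vec{r}}(\mu)$ is a closed subset of the compact set ${\cal K}^m$, hence compact.

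For the second statement I would run the same argument on $X$ and $Y$ simultaneously, invoking compactness of the analogous class for $\nu\in{\cal M}_1(Y)$. The only additional subtlety is that the vector of masses $\vec{r}$ is no longer fixed but ranges over $\Sigma_m$. Given a net $(\vec{\mu}^{(\alpha)},\vec{\nu}^{(\alpha)})\in{\cal P}_X^Y(\mu,\nu)$ with associated $\vec{r}^{(\alpha)}\in\Sigma_m$, I would use compactness of $\Sigma_m\subset\R^m$ to pass to a subnet along which $\vec{r}^{(\alpha)}\to \vec{r}^*\in\Sigma_m$. The previous argument then produces weak-$*$ limits $\vec{\mu}^*,\vec{\nu}^*$ satisfying $\mu_z^*(X)=\nu_z^*(Y)=r_z^*$ for every $z\in Z_m$, so $(\vec{\mu}^*,\vec{\nu}^*)\in{\cal P}_X^Y(\mu,\nu)$.

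The only genuine (mild) obstacle is the non-compactness of $X$: because the weak-$*$ topology is induced by $C_b(X)$ rather than $C_0(X)$, Banach--Alaoglu alone does not yield compactness of the norm-bounded set $\{\nu:\nu(X)\le 1\}$, and one must explicitly invoke tightness of $\mu$ (Ulam) to apply Prokhorov to the dominated family ${\cal K}$. Once this is in place, every other closure condition is linear in $f\in C_b(X)$ and passes to weak-$*$ limits by inspection.
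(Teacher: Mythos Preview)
Your argument is correct and, in fact, more careful than what the paper offers. The paper does not give a proof of this lemma at all; it merely states that the result ``is a result of compactness of probability Borel measure on a compact space'' and cites Bogachev's book. Your proof supplies the details that the paper omits, and you correctly flag the one point that requires care: the standing assumptions only make $X$ Polish, not compact, so one cannot simply invoke Banach--Alaoglu on the unit ball of ${\cal M}_+(X)$. Your route through Ulam tightness of $\mu$, domination $\mu_z\le\mu$, and Prokhorov is exactly the right fix, and the closure verifications (linearity of $\sum_z\mu_z=\mu$, testing masses against $1\in C_b(X)$, compactness of $\Sigma_m$ for the floating mass vector) are all straightforward and correctly handled.
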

  \begin{lemma}\label{lemapart}
  $$ c^{Z_m}(\mu,\nu)=\min_{(\vec{\mu}, \vec{\nu})\in{\cal P}_X^Y(\mu,\nu)}\sum_{z\in Z_m} \left[\int_X c^{(1)}(x,z)\mu_z(dx) + \int_Y c^{(2)}(z,y)\nu_z(dy\right] \  $$
  where $c^{Z_m}(\mu,\nu)$ as defined by (\ref{infmonge}, \ref{sd}) and $(\vec{\mu}, \vec{\nu})\in {\cal P}_X^Y(\mu,\nu)$.
  \end{lemma}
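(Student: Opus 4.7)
My plan is to prove the equality by establishing two inequalities between the Kantorovich cost $c^{Z_m}(\mu,\nu)$ and the infimum over partitions on the right-hand side. The key observation is that the semi-discrete cost $c^{Z_m}(x,y)$ is a pointwise minimum of $m$ functions indexed by $z\in Z_m$, so a transport plan naturally splits into $m$ sub-plans according to which $z$ realizes this minimum, and conversely $m$ partial plans can be glued into a single transport.

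For the inequality
\[
c^{Z_m}(\mu,\nu)\ \leq\ \inf_{(\vec\mu,\vec\nu)\in{\cal P}_X^Y(\mu,\nu)}\sum_{z\in Z_m}\Bigl[\int_X c^{(1)}(x,z)\mu_z(dx)+\int_Y c^{(2)}(z,y)\nu_z(dy)\Bigr],
\]
I would start with any $(\vec\mu,\vec\nu)\in{\cal P}_X^Y(\mu,\nu)$ with common masses $r_z=\mu_z(X)=\nu_z(Y)$. For each $z$ with $r_z>0$ set $\tilde\mu_z=\mu_z/r_z$, $\tilde\nu_z=\nu_z/r_z$, pick any coupling $\pi_z\in\Pi_X^Y(\tilde\mu_z,\tilde\nu_z)$ (the product coupling is enough), and form $\pi:=\sum_{z\in Z_m}r_z\pi_z$. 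Since marginals add linearly, $\pi\in\Pi_X^Y(\mu,\nu)$, and using $c^{Z_m}(x,y)\leq c^{(1)}(x,z)+c^{(2)}(z,y)$ for each $z$,
\[
\int c^{Z_m}\,d\pi\ \leq\ \sum_{z\in Z_m}\int\!\!\int\bigl[c^{(1)}(x,z)+c^{(2)}(z,y)\bigr]\pi_z(dxdy)\,r_z
\ =\ \sum_{z\in Z_m}\Bigl[\int c^{(1)}(x,z)\mu_z(dx)+\int c^{(2)}(z,y)\nu_z(dy)\Bigr],
\]
which gives the desired bound after taking the infimum over partitions.

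For the reverse inequality I would pick $\pi\in\Pi_X^Y(\mu,\nu)$ optimal (or almost optimal) for $c^{Z_m}(\mu,\nu)$, and measurably select, for each $(x,y)$, some $z^*(x,y)\in Z_m$ achieving the minimum defining $c^{Z_m}(x,y)$; since $Z_m$ is finite, an explicit tie-breaking rule (e.g.\ take the lexicographically first minimizer) produces a Borel map $z^*:X\times Y\to Z_m$. Set $A_z:=\{z^*=z\}$, and define $\mu_z$ (resp.\ $\nu_z$) as the $X$-marginal (resp.\ $Y$-marginal) of $\pi$ restricted to $A_z$. The sets $A_z$ partition $X\times Y$, so $\sum_z\mu_z=\mu$ and $\sum_z\nu_z=\nu$, while $\mu_z(X)=\pi(A_z)=\nu_z(Y)=:r_z$, hence $(\vec\mu,\vec\nu)\in{\cal P}_X^Y(\mu,\nu)$ with mass vector $\vec r\in\Sigma_m$. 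Then by construction
\[
c^{Z_m}(\mu,\nu)\ =\ \sum_{z\in Z_m}\int_{A_z}\bigl[c^{(1)}(x,z)+c^{(2)}(z,y)\bigr]\pi(dxdy)\ =\ \sum_{z\in Z_m}\Bigl[\int c^{(1)}(x,z)\mu_z(dx)+\int c^{(2)}(z,y)\nu_z(dy)\Bigr],
\]
which dominates the infimum on the right.

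Finally, the infimum is attained: the right-hand functional is linear in $(\vec\mu,\vec\nu)$ and lower semi-continuous for the $(C_b^*)^m(X)\times(C_b^*)^m(Y)$ weak-$*$ topology (assuming $c^{(1)},c^{(2)}$ are lower semi-continuous and bounded below, which is a standing hypothesis in this setting), while Lemma \ref{comp} provides compactness of ${\cal P}_X^Y(\mu,\nu)$. The main subtlety is the measurable selection in the reverse direction; this is harmless because $Z_m$ is finite, but it would be delicate if $Z$ itself were the index set, which is why the semi-discrete setup is essential here.
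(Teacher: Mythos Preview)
Your proof is correct and follows essentially the same route as the paper: for the upper bound you glue product couplings $r_z^{-1}\mu_z\otimes\nu_z$ into a single plan (exactly as the paper does), and for the lower bound you split an optimal plan along the sets where a given $z$ attains the minimum in $c^{Z_m}$ (the paper's $\Gamma_z$ is precisely your $A_z$), then take marginals. The only cosmetic difference is that you spell out the tie-breaking/measurable-selection issue and the lower-semicontinuity needed for attainment, which the paper leaves implicit.
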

  \begin{proof}
  First note that the existence of minimizer  is obtained by Lemma \ref{comp}. \par
  Define, for $z\in Z_m$,  $$\Gamma_z:= \{(x,y)\in X\times Y; \ c^{(1)}(x,z)+c^{(2)}(z,y)\leq c^{Z_m}(x,y) \}\subset X\times Y$$
  such that $\Gamma_z$ is measurable in $X\times Y$, $\Gamma_z\cap\Gamma_{z^{'}}=\emptyset$ if $z\not= z^{'}$ and $\sum_{z\in Z_m}\Gamma_z=X\times Y$. Note that, in general, the choice of $\{\Gamma_z\}$ is not unique.
  \par
  Given $\pi\in\Pi_X^Y(\mu,\nu)$, let $\pi_z$ be the restriction of $\pi$ to $\Gamma_z$. In particular $\sum_{z\in Z_m}\pi_z=\pi$. Let $\mu_z$ be the $X$ marginal of $\pi_z$ and $\nu_z$ the $y$ marginal of $\pi_z$.  Then $(\vec{\mu},\vec{\nu})$ defined in this way is in ${\cal P}_X^Y(\mu,\nu)$. Since by definition $c^{Z_m}(x,y)=c^{(1)}(x,z)+ c^{(2)}(z,y)$ a.s. $\pi_z$,  \begin{multline}\int_X\int_Yc^{Z_m}(x,y) \pi(dxdy) = \sum_{z\in Z_m} \int_X\int_Y c^{Z_m}(x,y)\pi_z(dxdy)\\
  =\sum_{z\in Z_m} \int_X\int_Y (c^{(1)}(x,z)\pi_z(dxdy)+ \int_X (c^{(2)}(z,y)\pi_z(dxdy)\pi_z(dxdy) \\
  =\sum_{z\in Z_m}\left[ \int_X c^{(1)}(x,z)\mu_z(dx)+ \int_Y c^{(2)}(z,y)\nu_z(dy)\right]\end{multline}
  Choosing $\pi$ above to be the optimal transport plan we get the inequality
  $$  c^{Z_m}(\mu,\nu)\geq\inf_{(\vec{\mu}, \vec{\nu})\in{\cal P}_X^Y(\mu,\nu)}\sum_{z\in Z_m} \left[\int_X c^{(1)}(x,z)\mu_z(dx) + \int_Y c^{(2)}(z,y)\nu_z(dy\right] \ . $$
  To obtain the opposite inequality, let $(\vec{\mu},\vec{\nu})\in {\cal P}_X^Y(\mu,\nu)$ and set $r_z:=\int_Xd\mu_z\equiv\int_Y d\nu_z$. Define
  $\pi(dxdy)=\sum_{z\in Z_m}r_z^{-1}\mu_z(dx)\nu_z(dy)$.  Then $\pi\in\Pi_X^Y(\mu,\nu)$ and, from (\ref{sd})
  \begin{multline} \int_X\int_Y c^{Z_m}(x,y)\pi(dxdy)= \sum_{z\in Z_m}\int_X\int_Y c^{Z_m}(x,y)r_z^{-1}\mu_z(dx)\nu_z(dy)
  \\ \leq \sum_{z\in Z_m}\int_X (c^{(1)}(x,z)+ c^{(2)}(z,y))r_z^{-1}\mu_z(dx)\nu_z(dy)\\
  = \sum_{z\in Z_m}\left[ \int_X c^{(1)}(x,z)\mu_z(dx)+ \int_Y c^{(2)}(z,y)\nu_z(dy)\right]
  \end{multline}
  and we get the second inequality.
  \end{proof}
  Given $\vp=(p_{z_1}, \ldots p_{z_m})\in\R^m$, let
  \be\label{xi}  \xi_{Z_m}^{(1)}(\vp,x):= \min_{z\in Z_m} c^{(1)}(x,z)+p_z \ \ ; \ \ \  \xi^{(2)}_{Z_m}(\vp,y):= \min_{z\in Z_m} c^{(2)}(z,y)+p_z\ee
  \be\label{Xi12} \Xi_\mu^{Z_m}(\vp):=\int_X \xi_{Z_m}^{(1)}(\vp,x)\mu(dx) \ \ ; \ \ \Xi_\nu^{Z_m}(\vp):=\int_Y \xi^{(2)}_{Z_m}(\vp,y)\nu(dy) \ . \ee
  \be\label{Xi} \Xi^\nu_{\mu, Z_m}(\vp):= \Xi_\mu^{Z_m}(\vp) + \Xi_\nu^{Z_m}(-\vp) \ . \ee

 \begin{lemma} \label{lemdu}If $\mu\in {\cal M}_1(X)$  then for any  $\vec{r}\in\Sigma_m$,
 \be\label{Xi*mu}(-\Xi_\mu^{Z_m})^*(-\vec{r}):= \sup_{\vp\in\R^m}\Xi_\mu^{Z_m}(\vp)-\vp\cdot\vec{r}= c^{(1)}\left(\mu, \sum_{z\in Z_m} r_z\delta_{z}\right) = \min_{\vec{\mu}\in {\cal P}_X^{\vec{r}}(\mu)}\sum_{z\in Z_m}\int_X c^{(1)}(x,z)\mu_z(dx)\ . \ee
 Analogously, for  $\nu\in{\cal M}_1(Y)$
  \be\label{Xi*nu}(-\Xi_\nu^{Z_m})^*(-\vec{r}):= \sup_{\vp\in\R^m}\Xi_\nu^{Z_m}(\vp)-\vp\cdot\vec{r}= c^{(2)}\left(\nu, \sum_{z\in Z_m}\delta_{z}\right)= \min_{\vec{\nu}\in {\cal P}_Y^{\vec{r}}(\nu)}\sum_{z\in Z_m}\int_Y c^{(2)}(z,y)\nu_z(dy)\ \ . \ee
 Here $\vp\cdot\vec{r}:= \sum_{z\in Z_m} r_zp_z$.
 \end{lemma}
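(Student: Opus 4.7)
The plan is to prove \eqref{Xi*mu} by using the middle quantity $c^{(1)}(\mu,\sum_{z\in Z_m}r_z\delta_z)$ as a bridge: first I would identify it with the partition minimum on the right, and then with the Legendre transform on the left via Kantorovich duality. The proof of \eqref{Xi*nu} is identical with $X,\mu,c^{(1)}$ replaced by $Y,\nu,c^{(2)}$.

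For the middle-equals-right identity I would disintegrate an arbitrary coupling $\pi\in\Pi_X^{Z_m}(\mu,\sum_{z\in Z_m} r_z\delta_z)$. Since the $Z_m$-marginal is atomic, any such $\pi$ decomposes uniquely as $\pi=\sum_{z\in Z_m}\mu_z(dx)\otimes\delta_z$ with $\mu_z\in{\cal M}_+(X)$; the marginal constraints then become $\sum_z\mu_z=\mu$ and $\int_X d\mu_z=r_z$, which is exactly the condition $\vmu\in{\cal P}_X^{\vr}(\mu)$ of Definition~\ref{defP}. The correspondence $\pi\leftrightarrow\vmu$ is bijective and preserves cost since $\int c^{(1)}\,d\pi=\sum_z\int_X c^{(1)}(x,z)\mu_z(dx)$; existence of a minimizer follows from Lemma~\ref{comp}.

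For the left-equals-middle identity I would appeal to Kantorovich duality. The weak inequality is immediate: by \eqref{xi} the dual pair $(\xi_{Z_m}^{(1)}(\vp,\cdot),-\vp)$ satisfies $\xi_{Z_m}^{(1)}(\vp,x)-p_z\leq c^{(1)}(x,z)$ for all $(x,z)$, so integrating against any admissible coupling yields $\Xi_\mu^{Z_m}(\vp)-\vp\cdot\vr\leq c^{(1)}(\mu,\sum_z r_z\delta_z)$, and passing to the supremum over $\vp$ gives one direction of \eqref{Xi*mu}. For the reverse inequality I would apply Fenchel--Moreau biduality to the value function
$$
g(\vr):=\min_{\vmu\in{\cal P}_X^{\vr}(\mu)}\sum_{z\in Z_m}\int_X c^{(1)}(x,z)\mu_z(dx),
$$
extended by $+\infty$ off $\Sigma_m$. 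Convexity of $g$ is immediate because convex combinations of partitions are partitions, and a short computation which substitutes $r_z=\int d\mu_z$ and exchanges $\sup_\vr$ with $\sup_\vmu$ identifies the Fenchel conjugate of $g$ with $\vq\mapsto -\Xi_\mu^{Z_m}(-\vq)$. Biduality $g=g^{**}$ then reads precisely $g(\vr)=\sup_\vp\Xi_\mu^{Z_m}(\vp)-\vp\cdot\vr$, as required.

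The main obstacle is verifying lower semicontinuity of $g$ on $\Sigma_m$, which is the nontrivial hypothesis of Fenchel--Moreau. I would derive it from Lemma~\ref{comp}: given $\vr^n\to\vr$ in $\Sigma_m$ and near-optimal partitions $\vmu^n$ for $\vr^n$, the $(C^*)^m$-compactness of ${\cal P}_X^{\vr}(\mu)$ produces a weak-$*$ accumulation point $\vmu^\infty\in{\cal P}_X^{\vr}(\mu)$ along a subsequence, and lower semicontinuity of the integral functional $\vmu\mapsto\sum_z\int c^{(1)}(x,z)\mu_z(dx)$ (under continuity or lower semicontinuity of $c^{(1)}$ together with any mild integrability condition against $\mu$) closes the argument. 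Should such regularity be unavailable, the alternative is a direct appeal to the Kantorovich--Rubinstein duality theorem, which in the special case of a discrete target marginal reduces to an elementary finite-dimensional linear-programming duality.
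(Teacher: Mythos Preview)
Your proposal is correct and essentially unpacks what the paper leaves as a citation: the paper's proof consists solely of the remark that this is a special case of Monge--Kantorovich duality (referring to Villani) and of the author's own Theorem~3.1 on generalized partitions in [\ref{[W]}]. Your disintegration argument for the middle-equals-right identity and your Fenchel--Moreau argument for the left-equals-middle identity are exactly how one would spell out those citations in the present semi-discrete setting; the only minor imprecision is that the compactness you need for lower semicontinuity is not of ${\cal P}_X^{\vr}(\mu)$ for fixed $\vr$ but of the set $\{\vmu: \mu_z\geq 0,\ \sum_z\mu_z=\mu\}$, which follows just as easily since each $\mu_z$ is dominated by the fixed probability $\mu$.
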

 \begin{proof}
 This is a special case of the general duality theorem of Monge-Kantorovich. See, for example [\ref{[V1]}]. It is also a special case of generalized partitions,
 see Theorem 3.1 and its proof in [\ref{[W]}].

 \end{proof}
 \begin{theorem}\label{firstdu}
 \be\label{ap} \sup_{\vp\in\R^m} \Xi_{\mu, Z_m}^\nu(\vp)= c^{Z_m}(\mu,\nu) \ . \ee
 \end{theorem}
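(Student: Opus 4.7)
The plan is to chain together the two preceding lemmas and close the loop by a minimax swap. First I would use Lemma \ref{lemapart} to write
\[
 c^{Z_m}(\mu,\nu) = \min_{\vr\in\Sigma_m}\Big[\min_{\vmu\in\mathcal{P}_X^{\vr}(\mu)}\sum_{z}\int_X c^{(1)}(x,z)\mu_z(dx) + \min_{\vnu\in\mathcal{P}_Y^{\vr}(\nu)}\sum_{z}\int_Y c^{(2)}(z,y)\nu_z(dy)\Big],
\]
exploiting the fact that membership in $\mathcal{P}_X^Y(\mu,\nu)$ decomposes through a common marginal $\vr\in\Sigma_m$. Then Lemma \ref{lemdu} replaces each inner minimum by a Legendre transform, giving
\[
 c^{Z_m}(\mu,\nu) = \min_{\vr\in\Sigma_m}\sup_{\vp,\vq\in\R^m}\big[\Xi_\mu^{Z_m}(\vp)+\Xi_\nu^{Z_m}(\vq)-(\vp+\vq)\cdot\vr\big].
\]

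The central analytic step is a minimax swap. The integrand is concave in $(\vp,\vq)$ (the functions $\Xi_\mu^{Z_m}$ and $\Xi_\nu^{Z_m}$ are concave as integrals of infima of affine functions in $\vp$), linear in $\vr$, and $\Sigma_m$ is convex and compact, so Sion's theorem applies and permits the exchange of $\min_{\vr}$ and $\sup_{\vp,\vq}$. After swapping, the inner minimum is immediate:
\[
 \min_{\vr\in\Sigma_m}\big[-(\vp+\vq)\cdot\vr\big]=-\max_{z\in Z_m}(p_z+q_z),
\]
so
\[
 c^{Z_m}(\mu,\nu) = \sup_{\vp,\vq\in\R^m}\big[\Xi_\mu^{Z_m}(\vp)+\Xi_\nu^{Z_m}(\vq)-\max_{z\in Z_m}(p_z+q_z)\big].
\]

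Finally I would reduce the two-variable sup to the one-variable sup that appears in $\Xi_{\mu,Z_m}^\nu$. Given $(\vp,\vq)$, set $m_0:=\max_z(p_z+q_z)$ and $\vq':=\vq-m_0\vec{1}$; by the additive-constant property of $\xi^{(2)}_{Z_m}$ in the definition \eqref{xi}, $\Xi_\nu^{Z_m}(\vq')=\Xi_\nu^{Z_m}(\vq)-m_0$ and $q'_z\le -p_z$ for every $z$, so monotonicity of the min in \eqref{xi} gives $\Xi_\nu^{Z_m}(\vq')\le\Xi_\nu^{Z_m}(-\vp)$. Hence the expression in the sup is bounded by $\Xi_\mu^{Z_m}(\vp)+\Xi_\nu^{Z_m}(-\vp)=\Xi_{\mu,Z_m}^\nu(\vp)$, proving $\le$ in \eqref{ap}. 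The reverse inequality comes from the admissible choice $\vq=-\vp$, for which $m_0=0$ and the objective becomes exactly $\Xi_{\mu,Z_m}^\nu(\vp)$.

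The part I expect to require the most care is the Sion swap: I would need to confirm that the $\sup_{\vp,\vq}$ of the integrand is in fact attained (or at least coincides with its value after the swap) despite $\R^m\times\R^m$ being noncompact, which is handled by observing that the joint Legendre transform in \eqref{Xi*mu}--\eqref{Xi*nu} is finite on $\Sigma_m$ so the upper envelope is proper and concave, and invoking Sion on any sufficiently large compact ball in $(\vp,\vq)$. Everything else is bookkeeping with the monotonicity and translation-equivariance of the $\xi$-functionals.
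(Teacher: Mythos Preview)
Your argument is correct, but the route differs from the paper's. After reaching
\[
 c^{Z_m}(\mu,\nu)=\inf_{\vr\in\Sigma_m}\big[(-\Xi_\mu^{Z_m})^*(-\vr)+(-\Xi_\nu^{Z_m})^*(-\vr)\big],
\]
the paper observes that both Legendre transforms are $+\infty$ off $\Sigma_m$, so the infimum may be taken over all of $\R^m$, and then applies the Fenchel--Rockafellar duality theorem directly to obtain $\sup_{\vp}\big[\Xi_\mu^{Z_m}(\vp)+\Xi_\nu^{Z_m}(-\vp)\big]$ in one stroke. You instead keep $\vr$ in $\Sigma_m$, invoke Sion's minimax theorem, and then collapse the resulting two-variable supremum over $(\vp,\vq)$ to a single $\vp$ via the translation and monotonicity properties of $\xi^{(2)}_{Z_m}$. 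Both work; the paper's version is shorter because Fenchel--Rockafellar already encodes your reduction step, while your approach is slightly more hands-on and avoids quoting that theorem. The paper also records an alternative Lagrangian proof by direct constrained minimization, which is closer in spirit to yours.

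One remark: your stated concern about the Sion swap is misplaced. Sion's theorem only requires compactness on one side, and $\Sigma_m$ is compact and convex while the objective is linear in $\vr$ and concave upper-semicontinuous in $(\vp,\vq)$; no truncation to balls is needed. The swap is clean as stated.
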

 \begin{proof}
 From Lemma \ref{lemapart}, Lemma \ref{lemdu} and Definition \ref{defP} we obtain
 \be\label{c=Xi} c^{Z_m}(\mu,\nu)= \inf_{\vec{r}\in\Sigma_m}\left[ (-\Xi_\mu^{Z_m})^*(-\vec{r})+(-\Xi_\nu^{Z_m})^*(-\vec{r})\right] \ . \ee
 Note that $(-\Xi_\mu^{Z_m})^*$, $(-\Xi_\nu^{Z_m})^*$ as  defined in ( \ref{Xi*mu}, \ref{Xi*nu}), are, in fact, the Legendre transforms of $-\Xi_\mu^{Z_m}$, $-\Xi_\nu^{Z_m}$, respectively. As such, they are defined formally on the whole domain $\R^m$ (considered as the dual of itself under the canonical  inner product). It follows that $(-\Xi_\mu^{Z_m})^*(\vec{r})=(-\Xi_\nu^{Z_m})^*(\vec{r})=\infty $   for $\vec{r}\in\R^m-\Sigma_m$. Note that this definition is consistent with the right hand side of ( \ref{Xi*mu}, \ref{Xi*nu}), since ${\cal P}_X^{\vec{r}}(\mu)={\cal P}_Y^{\vec{r}}(\nu)=\emptyset$ for $\vec{r}\not\in\Sigma_m$.

 On the other hand, $\Xi_\mu^{Z_m}$ and $\Xi_\nu^{Z_m}$ are both finite and continuous on the whole of $\R^m$. The Fenchel-Rockafellar duality theorem (see [\ref{[V1]}]- Thm 1.9) then implies
 \be\label{Xi*=Xi} \sup_{\vp\in\R^m} \Xi_\mu^{Z_m}(\vp)+ \Xi_\nu^{Z_m}(-\vp) = \inf_{\vec{r}\in \R^m} (-\Xi_\mu^{Z_m})^*(\vec{r})+ (-\Xi_\nu^{Z_m})^*(\vec{r})\ . \ee
 The proof follows from (\ref{Xi}, \ref{c=Xi}).
 \par\noindent
 {\bf An alternative proof}: \\ We can prove (\ref{ap}) directly by constrained minimization, as follows: $(\vec{\mu}, \vec{\nu})\in {\cal P}_X^Y(\mu,\nu)$ iff $F(\vp, \phi,\psi):=$
 $$\sum_{z\in Z_m} p_i\left(\int_X d\mu_i-\int_Y d\nu_i\right) + \int_X \phi(x)\left(\mu(dx)-\sum_{z\in Z_m} \mu_z(dx)\right) + \int_Y \psi(y)\left(\nu(dy)-\sum_{z\in Z_m} \nu_z(dy)\right)\leq 0$$
 for any choice of $\vp\in\R^m$, $\phi\in C(X)$, $\psi\in C(Y)$. Moreover, $\sup_{\vp,\phi,\psi}F=\infty$ unless $(\vec{\mu}, \vec{\nu})\in {\cal P}_X^Y(\mu,\nu)$.
  We can then obtain from Lemma \ref{lemapart}: $c^{Z_m}(\mu,\nu)=$
   \begin{multline}\label{multi}\inf_{\{\mu_z\in {\cal M}_+(X), \nu_z\in{\cal M}_+(Y)\}}\sup_{\vp\in\R^m, \phi\in C(X), \psi\in C(Y)}\sum_{z\in Z_m} \left[\int_X c^{(1)}(x,z)\mu_z(dx) + \int_Y c^{(2)}(z,y)\nu_z(dy)\right]   + F(\vp, \phi, \psi) \ \\
   =  \sup_{\vp\in\R^m, \phi\in C(X), \psi\in C(Y)}\inf_{\{\mu_z\in {\cal M}_+(X), \nu_z\in{\cal M}_+(Y)\}}
    \sum_{z\in Z_m} \int_X\left( c^{(1)}(x,z)+p_z-\phi(x)\right)\mu_z(dx)
  \\ + \sum_{z\in Z_m} \int_Y\left( c^{(2)}(z,y)-p_z-\psi(y)\right)\nu_z(dy) +\int_X\phi \mu(dx) +\int_Y\psi\nu(dy) \ . \end{multline}
 We now observe that the infimum on $\{\mu_z, \nu_z\}$
 above is $-\infty$ unless $c^{(1)}(x,z)+p_z-\phi(x)\geq 0$ and $c^{(2)}(z,y)+p_z-\psi(y)\geq 0$ for any $z\in Z_m$. Hence, the two sums on the right of  (\ref{multi}) are non-negative, so the infimum   with respect to $\{\mu_z, \nu_z\}$ is zero. To obtain the supremum on the last two integrals on the right of (\ref{multi}) we choose $\phi, \psi$ as large  as possible under this constraint, namely
 $$ \phi(x)=\min_{z\in Z_m} c^{(1)}(x,z)+p_z \ \ \ , \ \ \ \psi(y)=\min_{z\in Z_m} c^{(2)}(z,y)-p_z$$
 so $\phi(x)\equiv \xi_{Z_m}^{(1)}(\vp,x)$, $\psi(y)\equiv\xi_{Z_m}^{(2)}( -\vp,y)$  by definition via (\ref{xi}).
 \end{proof}
 \section{Strong partitions}\label{secstrong}
 We now define strong partitions as a special case of partitions (Definition \ref{defP}).
  \begin{defi}\label{sdefP}\noindent
 \begin{description}
 \item{i)} A partition $\vec{\mu}\in {\cal P}_X^{\vec{r}}(\mu)$ is called a {\em strong $m-$partition} if there exists $m$ measurable sets $A_z\subset X$, $z\in Z_m$ which are essentially disjoint, namely $\mu(A_z\cap A_{z^{'}})=\emptyset$ for $z\not= z^{'}$ and $\mu(\cup_{z\in Z_m} A_z)=X$, such that $\mu_z$ is the restriction of $\mu$ to $A_z$. The set of strong $m-$partition corresponding to $\vec{r}\in\Sigma_m$ is denoted by $\widehat{\cal P}_X^{\vec{r}}(\mu)$.
 \item {ii)} In addition, for  $\nu\in{\cal M}_1(Y)$ then $(\vec{\mu},\vec{\nu})\in \widehat{\cal P}_X^Y(\mu,\nu)$ iff $\vec{\mu}\in \widehat{\cal P}_X^{\vec{r}}(\mu)$ and $\vec{\nu}\in \widehat{\cal P}_Y^{\vec{r}}(\nu)$ for {\em some}  $\vec{r}\in\Sigma_m$. In particular, a strong $m-$partition is composed of $m$ $\mu$ measurable sets $A_z\subset X$ and $m$  $\nu$ measurable sets $B_z\subset Y$ such that
     $\int_{A_z} d\mu=\int_{B_z}d\nu$ for $z\in Z_m$.
 \end{description}
 \end{defi}
 \begin{assumption}\label{assAB}
 \begin{description} .
 \item{a)} $\mu\in{\cal M}_1(X)$ is atomless and
 $\mu(x; c^{(1)}(x,z)-c^{(1)}(x, z^{'})=p)=0$ for any $p\in\R$ and any $z, z^{'}\in Z_m$.
 \item{b)} $\nu\in{\cal M}_1(Y)$ is atomless and $\nu(y; c^{(2)}(z,y)-c^{(2)}(z^{'},y)=p)=0$ for any $p\in\R$ and any $z, z^{'}\in Z_m$.
 \end{description}
 \end{assumption}
 Let us also define, for $\vp\in\R^m$
 \be\label{sparAB}A_z(\vp):= \{ x\in X; \ c^{(1)}(x,z)+p_z= \xi^{(1)}_{Z_m}(\vp,x)\} \ \ \ ; \ \ \ B_z(\vp):= \{ y\in Y; \ c^{(2)}(z,y)+p_z= \xi^{(2)}_{Z_m}(\vp,y)\} \ . \ee
 Note that, by (\ref{xi}, \ref{Xi12})
 \be\label{Ximu0} \Xi_\mu^{Z_m}(\vp)= \sum_{z\in Z_m}\int_{A_z(\vp)} (c^{(1)}(x,z)+p_z)\mu(dx)\ee
 likewise
  \be\label{Xinu0} \Xi_\nu^{Z_m}(\vp)= \sum_{z\in Z_m}\int_{B_z(\vp)} (c^{(2)}(z,y)+p_z)\nu(dy) \ . \ee
 \begin{lemma}\label{Xidiff}Under assumption \ref{assAB} (a) (resp. (b))
 \begin{description}
 \item{i)} For any $\vp\in\R^m$, $\{A_z(\vp)\}$ (resp. $\{B_z(\vp)\}$) induces  essentially disjoint partitions of $X$ (resp. $Y$).
     \item{ii)} $\Xi_\mu^{Z_m}$  (resp. $\Xi_\nu^{Z_m}$) is continually  differentiable functions on $\R^m$,
     $$ \frac{\partial \Xi_\mu^{Z_m}}{\partial p_z}=\mu(A_z(\vp)) \ \ \ \text{resp.}  \ \ \ \frac{\partial \Xi_\nu^{Z_m}}{\partial p_z}=\nu(B_z(\vp)) \ . $$
 \end{description}
 \end{lemma}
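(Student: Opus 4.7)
\textbf{Proof plan for Lemma \ref{Xidiff}.} I will carry out the $\mu$/$A_z$ side; the $\nu$/$B_z$ case is identical under Assumption \ref{assAB}(b). For $\vp\in\R^m$ and $z\in Z_m$ introduce the \emph{strict} cell
$$
A_z^*(\vp):=\{x\in X:\ c^{(1)}(x,z)+p_z<c^{(1)}(x,z')+p_{z'}\ \text{for all}\ z'\neq z\}\subset A_z(\vp).
$$
Note that $A_z(\vp)\setminus A_z^*(\vp)\subset\bigcup_{z'\neq z}\{x:c^{(1)}(x,z)-c^{(1)}(x,z')=p_{z'}-p_z\}$, which has $\mu$-measure $0$ by Assumption \ref{assAB}(a). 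Consequently $\mu(A_z(\vp))=\mu(A_z^*(\vp))$, and the strict cells are genuinely pairwise disjoint.

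\textbf{Part (i).} By definition every $x\in X$ achieves the minimum in (\ref{xi}) at some $z$, so $\bigcup_{z}A_z(\vp)=X$. The pairwise intersection $A_z(\vp)\cap A_{z'}(\vp)$ is contained in the tie set $\{c^{(1)}(\cdot,z)-c^{(1)}(\cdot,z')=p_{z'}-p_z\}$, which has $\mu$-measure $0$ by the assumption. Hence $\{A_z(\vp)\}_{z\in Z_m}$ is an essentially disjoint covering of $X$.

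\textbf{Part (ii), differentiability.} Fix $\vp\in\R^m$ and $z_0\in Z_m$, and let $e_{z_0}$ be the corresponding standard basis vector. For each $x\in A_z^*(\vp)$ there is $\delta(x)>0$ such that for $|h|<\delta(x)$ the index $z$ is still the unique minimizer at $\vp+he_{z_0}$, so
$$
\frac{\xi^{(1)}_{Z_m}(\vp+he_{z_0},x)-\xi^{(1)}_{Z_m}(\vp,x)}{h}=\mathbf 1_{\{z=z_0\}}=\mathbf 1_{A_{z_0}^*(\vp)}(x).
$$
Since the union $\bigcup_z A_z^*(\vp)$ is co-null for $\mu$, the pointwise limit of the difference quotient as $h\to 0$ equals $\mathbf 1_{A_{z_0}(\vp)}(x)$ $\mu$-a.e. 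Moreover $\xi^{(1)}_{Z_m}(\cdot,x)$ is $1$-Lipschitz in each coordinate (as an infimum of $1$-Lipschitz functions), so the difference quotients are uniformly bounded by $1$. Dominated convergence then yields
$$
\frac{\partial \Xi_\mu^{Z_m}}{\partial p_{z_0}}(\vp)=\int_X\mathbf 1_{A_{z_0}(\vp)}(x)\,\mu(dx)=\mu(A_{z_0}(\vp)).
$$

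\textbf{Part (ii), continuity of the derivative.} Let $\vp_n\to\vp$. For any $x\in A_{z_0}^*(\vp)$ the strict inequalities defining $A_{z_0}^*(\vp)$ persist for all large $n$, so $x\in A_{z_0}^*(\vp_n)\subset A_{z_0}(\vp_n)$. Conversely, if $x\in A_{z'}^*(\vp)$ for some $z'\neq z_0$, the corresponding strict inequality $c^{(1)}(x,z')+p_{z'}<c^{(1)}(x,z_0)+p_{z_0}$ persists, so $x\notin A_{z_0}(\vp_n)$ eventually. Since $\bigcup_{z}A_z^*(\vp)$ is co-null for $\mu$, we get $\mathbf 1_{A_{z_0}(\vp_n)}\to\mathbf 1_{A_{z_0}(\vp)}$ pointwise $\mu$-a.e., and bounded convergence gives $\mu(A_{z_0}(\vp_n))\to\mu(A_{z_0}(\vp))$, proving $\Xi_\mu^{Z_m}\in C^1(\R^m)$.

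\textbf{Main obstacle.} The only real subtlety is the uniform control needed to swap the $h\to 0$ (and $\vp_n\to\vp$) limit with the integral over a possibly noncompact $X$. This is handled for free by the coordinate-wise $1$-Lipschitz property of $\xi^{(1)}_{Z_m}$, so no integrability hypothesis beyond $\mu\in{\cal M}_1(X)$ is required; the assumption is used only to guarantee that the $\mu$-null set of ties never contributes to the boundary effects.
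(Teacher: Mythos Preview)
Your proof is correct. The paper does not actually prove this lemma: it simply cites Lemma~4.3 of the companion paper [\ref{[W]}] and moves on. Your argument supplies what the paper omits, via the standard route---introduce the strict cells $A_z^*(\vp)$, use Assumption~\ref{assAB}(a) to see that the tie sets are $\mu$-null, and then pass to the limit under the integral using the coordinate-wise $1$-Lipschitz bound on $\xi^{(1)}_{Z_m}(\cdot,x)$ for dominated convergence. The continuity argument (persistence of strict inequalities under perturbation of $\vp$) is likewise the expected one. In short, you have written out a self-contained proof where the paper defers to an external reference; there is nothing to correct.
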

This Lemma is a special case of  Lemma 4.3 in  [W].
 \begin{theorem}\label{strongth}
  Under either  assumption \ref{assAB}-(a) or (b) there exists a unique
 minimizer $\vec{r}_0$ of (\ref{c=Xi}). In addition, there exists a maximizer $\vp_0\in\R^m$ of $\Xi_{\mu, Z_m}^\nu$,  and either (in case (a)) $\{A_z(\vp_0)\}$ or  (in case (b)) $\{B_z(-\vp_0)\}$  induces a corresponding strong $m-$partition in (a) $\widehat{\cal P}_X^{\vec{r}_0}(\mu)$ or
 (b) $\widehat{\cal P}_Y^{\vec{r}_0}(\nu)$. In particular, if {\em both} (a+b) holds then  $\{A_z(\vp_0), \{B_z(-\vp_0)\}$ induces a strong $m-$partition in
 $\widehat{\cal P}_X^Y(\mu,\nu)$, and
 \be\label{pirep} \pi_0(dxdy):= \sum_{z\in Z_m; r_{0,z}=\mu(A_z(\vp_0))} (r_{0,z})^{-1}{\bf 1}_{A_z(\vp_0)}(x){\bf 1}_{B_z(-\vp_0)}(y) \mu(dx)\nu(dy) \ee
 is the unique optimal transport plan for  $c^{Z_m}(\mu, \nu)$.
\end{theorem}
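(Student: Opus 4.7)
The plan rests on extending the Fenchel--Rockafellar duality of Theorem~\ref{firstdu} using the regularity provided by Lemma~\ref{Xidiff}. Concavity of $\Xi_{\mu,Z_m}^\nu$ on $\R^m$ is automatic, since $\xi^{(1)}_{Z_m}(\cdot,x)$ and $\xi^{(2)}_{Z_m}(\cdot,y)$ are each infima of affine functions in $\vp$. The functional is invariant under the translation $\vp\mapsto\vp+\beta(1,\ldots,1)$, so I restrict to the hyperplane $\vp\cdot(1,\ldots,1)=0$. On this hyperplane coercivity is easy: if $\max_z p_z\to +\infty$ then $\xi^{(2)}_{Z_m}(-\vp,y)\le c^{(2)}(z^+,y)-\max_z p_z\to -\infty$ and $\Xi_\nu^{Z_m}(-\vp)\to -\infty$; symmetrically, $\min_z p_z\to -\infty$ forces $\Xi_\mu^{Z_m}(\vp)\to -\infty$; the zero-sum constraint ensures one of these occurs whenever $|\vp|\to\infty$. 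Concavity plus coercivity produce a maximizer $\vp_0$.

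For the uniqueness of $\vec{r}_0$, I work under assumption (a); case (b) is symmetric. Lemma~\ref{Xidiff}(ii) gives that $-\Xi_\mu^{Z_m}$ is convex and $C^1$ on all of $\R^m$, hence essentially smooth in Rockafellar's sense (there is no boundary to approach). By the classical duality between essential smoothness and essential strict convexity, $(-\Xi_\mu^{Z_m})^*$ is strictly convex on every convex subset of the domain of its subdifferential. If two distinct $\vec{r}_0\ne \vec{r}_1$ both minimized $F(\vec{r}):=(-\Xi_\mu^{Z_m})^*(-\vec{r})+(-\Xi_\nu^{Z_m})^*(-\vec{r})$, then convexity would force $F$ to be constant on $[\vec{r}_0,\vec{r}_1]$ and, summand by summand, force both $(-\Xi_\mu^{Z_m})^*(-\cdot)$ and $(-\Xi_\nu^{Z_m})^*(-\cdot)$ to be affine there; but every minimizer of $F$ lies in the subdifferential-domain of $(-\Xi_\mu^{Z_m})^*$, since $0\in\partial F(\vec{r})$ decomposes through both summands by Moreau--Rockafellar (their effective domains both being $\Sigma_m$), contradicting essential strict convexity.

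At any optimal pair $(\vp_0,\vec{r}_0)$, the Fenchel--Young equality produces $\vec{r}_0$ as an element of the superdifferential of the concave $\Xi_\mu^{Z_m}$ at $\vp_0$, and differentiability (Lemma~\ref{Xidiff}(ii)) collapses this to $\vec{r}_0=\nabla\Xi_\mu^{Z_m}(\vp_0)=(\mu(A_z(\vp_0)))_{z\in Z_m}$. Combined with Lemma~\ref{Xidiff}(i), this exhibits $\{A_z(\vp_0)\}$ as a strong partition in $\widehat{\cal P}_X^{\vec{r}_0}(\mu)$; under (b), the same argument applied to $\Xi_\nu^{Z_m}$ yields $\{B_z(-\vp_0)\}\in \widehat{\cal P}_Y^{\vec{r}_0}(\nu)$, and under (a)+(b) both hold for the \emph{same} $\vec{r}_0$ by the uniqueness just established.

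Finally, for the representation (\ref{pirep}) under (a)+(b): the marginals of $\pi_0$ are $\mu$ and $\nu$ since $\nu(B_z(-\vp_0))=r_{0,z}=\mu(A_z(\vp_0))$ and $\{A_z(\vp_0)\}$ exhausts $X$ modulo $\mu$-null sets. Adding the two inequalities that define $x\in A_z(\vp_0)$ and $y\in B_z(-\vp_0)$ yields $c^{(1)}(x,z)+c^{(2)}(z,y)\le c^{(1)}(x,z')+c^{(2)}(z',y)$ for every $z'$, so $c^{Z_m}(x,y)=c^{(1)}(x,z)+c^{(2)}(z,y)$ on $A_z(\vp_0)\times B_z(-\vp_0)$; integrating this identity against $\pi_0$ reduces the cost to $\sum_z\int_{A_z(\vp_0)}c^{(1)}(x,z)\mu(dx)+\int_{B_z(-\vp_0)}c^{(2)}(z,y)\nu(dy)$, which by Lemma~\ref{lemapart} applied to the strong partition just constructed equals $c^{Z_m}(\mu,\nu)$. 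The principal obstacle in this plan is the uniqueness of $\vec{r}_0$: naive strict convexity of $(-\Xi_\mu^{Z_m})^*$ is unavailable, and the argument genuinely relies on the Rockafellar essential-smoothness/essential-strict-convexity correspondence.
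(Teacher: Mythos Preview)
Your argument is correct and follows the same essential strategy as the paper: use the $C^1$ regularity of $\Xi_\mu^{Z_m}$ (Lemma~\ref{Xidiff}) to force strict convexity of its Legendre transform, hence uniqueness of $\vec r_0$, and then read off the strong partition from the gradient identity $\vec r_0=\nabla\Xi_\mu^{Z_m}(\vp_0)$. Two points of comparison are worth noting. First, your existence argument for $\vp_0$ is considerably more direct: the paper instead establishes, for each fixed $\vec r\in\Sigma_m$, that the supremum defining $(-\Xi_\mu^{Z_m})^*(-\vec r)$ is attained, via a lengthy analysis of unbounded maximizing sequences; your coercivity bound $\Xi_{\mu,Z_m}^\nu(\vp)\le C_1+C_2+\min_z p_z-\max_z p_z$ on the hyperplane $\vp\cdot\vec 1=0$ gets the maximizer of $\Xi_{\mu,Z_m}^\nu$ in one stroke (you should say explicitly that $\Xi_\mu^{Z_m}(\vp)\le C+\min_z p_z$ is bounded above on that hyperplane, so the divergence of $\Xi_\nu^{Z_m}(-\vp)$ cannot be cancelled). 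Second, where you invoke Rockafellar's essential-smoothness/essential-strict-convexity duality, the paper proves exactly that implication by hand: if $(-\Xi_\mu^{Z_m})^*$ were affine on a segment $[\vec r_1,\vec r_2]$, Young's equality would place both $\vec r_1,\vec r_2$ in the subdifferential of $-\Xi_\mu^{Z_m}$ at a single point, contradicting differentiability. Your appeal to Moreau--Rockafellar to put minimizers in the subdifferential domain is slightly delicate because both summands have effective domain $\Sigma_m$ with empty interior; it is cleaner, once you already have $\vp_0$, to use the Fenchel--Young equality directly (as you do two lines later) to conclude $-\vec r_0\in\partial(-\Xi_\mu^{Z_m})(\vp_0)$, hence $\vp_0\in\partial(-\Xi_\mu^{Z_m})^*(-\vec r_0)$, which is all you need. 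Finally, your closing sentence should cite Theorem~\ref{firstdu} rather than Lemma~\ref{lemapart}: the lemma only gives the inequality $\ge$, while equality with $c^{Z_m}(\mu,\nu)$ for this particular partition is precisely the content of the duality $\Xi_{\mu,Z_m}^\nu(\vp_0)=c^{Z_m}(\mu,\nu)$ together with the cancellation of the $p_{0,z}$ terms forced by $\mu(A_z(\vp_0))=\nu(B_z(-\vp_0))$. Neither your proof nor the paper's actually establishes the \emph{uniqueness} of $\pi_0$ asserted in the statement.
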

\begin{proof}
Note that $\Xi(\vp)-\vec{r}\cdot\vp$ is invariant under additive shift for $\Xi=\Xi_\mu^{Z_m}, \Xi_\nu^{Z_m}$ and $\vec{r}\in \Sigma_m$. Indeed,
$\Xi(\vp+\alpha \vec{1})=\Xi(\vp)+\alpha$ for any $\alpha\in\R$ where $\vec{1}:=(1, \ldots 1)$. So, we restrict the domain of $\Xi$ to
\be\label{normal} \vp\in R^m \ \ , \ \ \vp\cdot\vec{1}=0 \ .  \ee
Assume (a). Given $\vec{r}\in\Sigma_m$.
Assume first
\be\label{posr}r_z\in (0,1) \ \ \ \text{for any} \  z\in Z_m \ . \ee  We  prove the existence of a maximizer $\vp_0$,
$$(-\Xi_\mu^{Z_m})^*(-\vec{r})=\Xi_\mu^{Z_m}(\vp_0)-\vp_0\cdot\vec{r}\geq \Xi_\mu^{Z_m}(\vp)-\vp\cdot\vec{r}$$
for any $\vp\in\R^m$. Let $\vp_n$ be a maximizing sequence, that is
$$\lim_{n\rightarrow\infty} \Xi_\mu^{Z_m}(\vp_n)-\vp_n\cdot\vec{r}=(-\Xi_\mu^{Z_m})^*(-\vec{r})$$
(c.f. (\ref{Xi*nu})).
 \par
 Let $\|\vp\|_2:= (\sum_{z\in Z_m}p^2_z)^{1/2}$ be the Euclidian norm of $\vp=(p_{z_1}, \ldots p_{z_m})\in\R^m$. If we prove that for any maximizing sequence $\vp_n$ the norms $\|\vp_n\|_2$ are uniformly bounded, then there exists  a converging subsequence whose limit is the maximizer $\vp_0$. This follows, in particular, since $\Xi_\mu^{Z_m}$ is a closed (upper-semi-continuous) function.
 \par
 Assume there exists a subsequence along which $\|\vp_n\|_2\rightarrow\infty$. Let $\hat{\vp}_n:= \vp_n/\|\vp_n\|_2$.
Let
 \begin{multline}\label{try1}\Xi_\mu^{Z_m}(\vp_n)-\vp_n\cdot\vec{r}:= \left[\Xi_\mu^{Z_m}(\vp_n)-\vp_n\cdot\nabla_{\vp}\Xi_\mu^{Z_m}(\vp_n)\right] + \vp_n\cdot\left( \nabla_{\vp}\Xi_\mu^{Z_m}(\vp_n)-\vec{r}\right)\\
 =\left[\Xi_\mu^{Z_m}(\vp_n)-\vp_n\cdot\nabla_{\vp}\Xi_\mu^{Z_m}(\vp_n)\right] + \|\vp_n\|_2\hat{\vp}_n\cdot\left( \nabla_{\vp}\Xi_\mu^{Z_m}(\vp_n)-\vec{r}\right) \ . \end{multline}
In addition, by (\ref{Ximu0}) and Lemma\ref{Xidiff}-(ii)
 \begin{multline}\label{try2}-\infty < \int_X \min_{z\in Z_m} c^{(1)}(x,z) \mu(dx) \leq \left[\Xi_\mu^{Z_m}(\vp)-\vp\cdot\nabla_{\vp}\Xi_\mu^{Z_m}(\vp)\right]=\sum_{z\in Z_m} \int_{A_z(\vp)}c^{(1)}(x,z)\mu(dx) \\ \leq \int_X \max_{z\in Z_m} c^{(1)}(x,z) \mu(dx)<\infty \ . \end{multline}
 By (\ref{try1}- \ref{try2}) we obtain, for $\|\vp_n\|_2\rightarrow\infty$,
 \be\label{lim0phat}\lim_{n\rightarrow\infty} \hat{\vp}_n\cdot\left( \nabla_{\vp}\Xi_\mu^{Z_m}(\vp_n)-\vec{r}\right)=0 \ . \ee
   Since $\hat{\vp}_n$ lives in the unit sphere $S^{m-1}$ in $\R^m$ (which is a compact set), there exists a subsequence for which $\hat{\vp}_n\rightarrow \hat{\vp}_0:= (\hat{p}_{0,z_1}, \ldots \hat{p}_{0,z_m})\in S^{m-1}$. Let $P_-:= \min_{z\in Z_m} \hat{p}_{z,0}$ and $J_-:= \{ z\in Z_m \ ; \hat{p}_{0,z}=P_-\}$.

 Note that for $n\rightarrow\infty$ along such a subsequence, $p_{n,z}-p_{n, z\prime}\rightarrow-\infty$ for $z\in J_-, z\prime\not\in J_-$. It follows that $A_{z\prime}(\vp_n)=\emptyset$ if $z\prime\not\in J_-$ for $n$ large enough, hence $\cup_{z\in J_-}A_z(\vp_n) =X$ for $n$ large enough. Let
 $\mu_z^n$ be the restriction of $\mu$ to $A_z(\vp_n)$. Then the limit $\mu^n_z\rightharpoonup \mu_z$ exists (along a subsequence) where $n\rightarrow\infty$.  In particular, by Lemma \ref{Xidiff}
 $$\lim_{n\rightarrow\infty} \frac{\partial\Xi_\mu^{Z_m}}{\partial p_{n,z}}(\vp_n)= \int_X d\mu_z$$
 while  $\mu_z\not=0$ if only if $z\in J_-$, and $\sum_{z\in J_-}\mu_z=\mu$. Since $\hat{\vp}_{0,z}=P_-$ for $z\in J_-$ is the minimal value of the coordinates of $\hat{\vp}_0$, it follows that
 $$\lim_{n\rightarrow\infty} \hat{\vp}_n\cdot\left( \nabla_{\vp}\Xi_\mu^{Z_m}(\vp_n)-\vec{r}\right)=-\vec{r}\cdot\hat{\vp}_0 +P_-\sum_{z\in J_-}\int_Xd\mu_z=-\vec{r}\cdot\hat{\vp}_0 +P_- \ . $$
Now, by (\ref{posr}), $\vec{r}\cdot \vp_0>P_-$ unless $J_-=Z_m$. In the last case we obtain a contradiction of (\ref{normal}) since it implies $\hat{\vp}_0=0$ which contradicts $\hat{\vp}_0\in S^{m-1}$. If $J_-$ is a proper subset of $Z_m$ we obtain a contradiction to (\ref{lim0phat}).
\par
If (\ref{posr}) is violated we may restrict to domain of $\Xi_\mu^{Z_m}$ to a subspace by eliminating all coordinates $z\in Z_m$ for which $r_z=0$. On the restricted subspace we have a minimizer $\vp_0$ by the above proof. Then we may extend $\vp_0$ by  assigning  $p_z$ sufficiently small if $r_z=0$. This guarantees  $A_z(\vp_0)=\emptyset$, hence (Lemma \ref{Xidiff}) $\partial\Xi_\mu^{Z_m}/\partial p_z=0$ for any such $z$. Hence the extended $\vp_0$ is still a critical point of $\Xi_\mu^{Z_m}(\vp)-\vec{r}\cdot\vp$, and is a maximizer by concavity of $\Xi_\mu^{Z_m}$.
\par
Next, we prove that $A_z(\vp_0)$ is a unique  optimal partition of $X$.  Let $\vec{\mu}\in {\cal P}_X^r$ be a minimizer of  (\ref{Xi*mu}).  Since $\int_Xd\mu_z=r_z$, $\sum_{z\in Z_m}\mu_z=\mu$,  (\ref{Xi*mu}) implies
$$  \sum_{z\in Z_m}\int_X c^{(1)}(x,z)\mu_z(dx) = (-\Xi_\mu^{Z_m})^*(-\vr)$$
and
 $$(-\Xi_\mu^{Z_m})^*(-\vr)=\Xi_\mu^{Z_m}(\vp_0)-\vr\cdot\vp_0 = \int_X  \xi^{(1)}_{Z_m}( \vp_0,x)d\mu - \vp_0\cdot\vr=\sum_{z\in Z_m}\int_X \left( \xi^{(1)}_{Z_m}( \vp_0,x) - p_{0,z}\right)d\mu_z \ ,$$
so
$$ \sum_{z\in Z_m}\int_X\left( \xi^{(1)}_{Z_m}(\vp_0,x)-p_{0,z} -c^{(1)}(x,z)\right)\mu_z(dx)=0 \ . $$
On the other hand, $\xi^{(1)}_{Z_m}(\vp_0,x)-p_{0,z} -c^{(1)}(x,z)\leq 0$ for any $x\in X$ by definition (\ref{xi}), so we must have the equality
$$ \xi^{(1)}_{Z_m}(\vp_0,x)=p_{0,z} +c^{(1)}(x,z)$$
a.e. on supp$(\mu_z)$. Hence supp$(\mu_z)\subset A_z(\vp_0)$.  Since $A_z(\vp_0)$ are mutually disjoint and $\sum_{z\in Z_m}\mu_z=\mu$, then $\mu_z$ is necessarily the restriction of $\mu$ to $A_z(\vp_0)$.  On the other hand, for any $\vp\not=\vp_0\mod \mathbb{R}{\vec{1}}$ there exists $z\in Z_m$ for which $\mu\left(A_z(\vp_0)\Delta A_z(\tilde{\vp})\right)\not=0$. This implies that the strong partition $\vec{A}(\vp_0)$ is the unique one.

The same result  is applied to $\Xi_\nu^{Z_m}(\vp)-\vp_0\cdot\vr$. If we show that the minimizer $\vr_0$ of the right side of  (\ref{Xi*=Xi}) is unique, then it follows that the maximizer $\vp_0$ of the left side of (\ref{Xi*=Xi}) is unique as well (up to $\vec{1}\mathbb{R}$), and, in particular, the optimal partition is unique. Hence, we only have to show the uniqueness of the minimizer of the right side of (\ref{Xi*=Xi}). This, in turn, follows if  either $(-\Xi_\mu^{Z_m})^*$ or $(-\Xi_\nu^{Z_m})^*$ is {\it strictly convex}.
\par
 To prove this we recall some basic elements form convexity theory (see, e.g. [BC]):
  \begin{description}
  \item{i)} \ If $F$ is a convex function on $\R^m$ (say), then  the  sub gradient $\partial F$ at point $p\in\R^m$ is defined as follows: $\vq\in \partial F(\vp)$ if and only if
 $$ F(\vp\prime)-F(\vp)\geq \vq\cdot(\vp\prime-\vp) \ \ \ \forall \vp\prime\in\R^m \ . $$
\item{ii)} The Legendre transform of $F$:
$$ F^*(\vq):= \sup_{\vp\in\R^m} \vp\cdot\vq-F(\vp) \ ,$$
and $Dom(F^*)\subset\R^m$ is the set on which $F^*<\infty$.
  \item{iii)} The function $F^*$ is convex (and closed), but $Dom(F^*)$ can be a proper subset of $\R^m$ (or even an empty set).
  \item {iv)} The subgradient of a convex function is non-empty (and convex) at any point in the proper domain of this function (i.e. at any point in which the function takes a value in $\R$).
\item{v)} \
Young's  inequality
$$ F(\vp)+F^*(\vq)\geq \vp\cdot\vq$$
holds for any pair of points $(\vp, \vq)\in \R^m\times\R^m$. The equality holds iff $\vq\in\partial F(\vp)$, iff $\vp\in\partial F^*(\vq)$.
\item{vi)} \
The Legendre transform is involuting, i.e $F^{**}=F$ if $F$ is convex and closed.
 \item{vii)} A convex function is continuously differentiable in the interior of its proper domain iff its subgradient at any point in the interior of its domain is a singleton.
\end{description}
Returning to our case, let $F:=-\Xi_\mu^{Z_m}$. It is a closed, convex, proper and continuously differentiable  function defined everywhere on  $\R^m$. Assume $(-\Xi_\mu^{Z_m})^*$ is not strictly convex. It means there exists $\vr_1\not= \vr_2\in Dom(-\Xi_\mu^{Z_m})^*$ for which \be\label{just}(-\Xi_\mu^{Z_m})^*(\frac{\vr_1+\vr_2}{2})=\frac{(-\Xi_\mu^{Z_m})^*(\vr_1)+(-\Xi_\mu^{Z_m})^*(\vr_2)}{2} \ . \ee
Let $\vr:=\vr_1/2+\vr_2/2$, and $\vp\in \partial(-\Xi_\mu^{Z_m})^*(\vr)$. Then, by (iv, v)
\be\label{just1} 0=(-\Xi_\mu^{Z_m})^*(\vr)+ (-\Xi_\mu^{Z_m})^{**}(\vp)-\vp\cdot\vr=(-\Xi_\mu^{Z_m})^*(\vr) -\Xi_\mu^{Z_m}(\vp)-\vp\cdot\vr \ . \ee
By (\ref{just}, \ref{just1}):
$$\frac{1}{2}\left( (-\Xi_\mu^{Z_m})^*(\vr_1)-\Xi_\mu^{Z_m}(\vp)-\vp\cdot\vr_1 \right)+ \frac{1}{2}\left( (-\Xi_\mu^{Z_m})^*(\vr_2)-\Xi_\mu^{Z_m}(\vp)-\vp\cdot\vr_2 \right)=0$$
while (v) also guarantees
$$(-\Xi_\mu^{Z_m})^*(\vr_i)-\Xi_\mu^{Z_m}(\vp)-\vp\cdot\vr_i\geq 0 \ \ \ , \ \ i=1,2 \ . $$
It follows
$$(-\Xi_\mu^{Z_m})^*(\vr_i)-\Xi_\mu^{Z_m}(\vp)-\vp\cdot\vr_i= 0 \ \ \ , \ \ i=1,2 \ , $$
so, by (v) again,  $\{\vr_1, \vr_2\}\in \partial\Xi_\mu^{Z_m}(\vp)$. This is a contradiction of (vii) since $\Xi_\mu^{Z_m}$ is continuously differentiable everywhere on $\R^m$ by Lemma \ref{Xidiff}.
\par
Finally, we prove that $\pi_0$ given by (\ref{pirep}) is an optimal plan.
First observe that $\pi_0\in\Pi(\mu,\nu)$, hence
$$ c^{Z_m}(\mu,\nu)\leq \int_X\int_Y c^{Z_m}(x,y)\pi_0(dxdy) \ . $$
 Then we get, from (\ref{sd})
$$c^{Z_m}(\mu,\nu)\leq  \int_X\int_Y c^{Z_m}(x,y)\pi_0(dxdy)\leq \sum_{z\in Z_m}\int_{A_z(\vp_0)\times B_z(-\vp_0)} (c^{(1)}(x,z)\mu(dx) + c_{(2)}(y,z)\nu(dy))$$
$$ = \sum_{z\in Z_m}\left( \int_{A_z(\vp_0)} c^{(1)}(x,z)\mu(dx) + \int_{B_z(-\vp_0)} c^{(2)}(z,y)\nu(dy)\right) = \Xi(\vp_0)\leq c^{Z_m}(\mu,\nu) \ $$
where the last equality from Theorem \ref{firstdu}. In particular, the first inequality is an equality so $\pi_0$ is an optimal plan indeed.
\end{proof}
\section{Pricing in hedonic market}\label{hedonic}
In adaptation to the  model of Hedonic market [\ref{[Hed]}] there are 3 components: The space of  consumers  (say, $X$), space of producers (say $Y$) and space of commodities, which we take here to be a finite set $Z_m:=\{z_1, \ldots z_m\}$.    The function $c^{(1)}:= c^{(1)}(x,z)$ is  the {\it negative} of the utility
of commodity $z\in Z_m$ to consumer $x$, while  $c^{(2)}:=c^{(2)}(z,y)$ is the cost of producing commodity $z\in Z_m$ by the producer  $y$.
\par
Let $\mu$ be a probability measure on $X$ representing the distribution of  consumers, and $\nu$ a probability measure on $Y$ representing the distribution   of the producers. Following [\ref{[Hed]}] we add the "null commodity" $z_0$ and assign the zero utility and cost
 $c^{(1)}(x, z_0)=c^{(2)}( z_0,y)\equiv 0$ on $X$ (resp. $Y$). We understand the meaning that a consumer (producer) chooses the null commodity is that he/she avoids consuming (producing) any item from $Z_m$.

The object of pricing in Hedonic market is to find   equilibrium prices for the commodities which will balance supply and demand: Given  a price $p_z$ for $z$, the consumer at $x$ will buy the commodity $z$ which minimize   its loss  $c^{(1)}(x,z)+p_z$, or will  buy nothing (i.e. "buy" the null commodity $z_0$) if $\min_{z\in Z_m}c^{(1)}(x,z)+p_z>0$),  while producer at $y$ will prefer to produce commodity $z$ which maximize its profit $-c^{(2)}(z,y) + p_z$, or will produce nothing if $\max_{z\in Z_m}-c^{(2)}(z,y)+p_z<0$. Using notation (\ref{xi}-\ref{Xi}) we define
 \be\label{xi0}  \xi_X^0(\vp,x):= \min\{\xi^{(1)}_{Z_m}(\vp,x), 0\} \ \ ; \ \ \  \xi^0_Y(\vp,y):= \min\{\xi^{(2)}_{Z_m}(\vp,y),0\}\ee
  \be\label{Xi120} \Xi^0_\mu(\vp):=\int_X \xi_X^0(\vp,x)\mu(dx) \ \ ; \ \ \Xi^0_\nu(\vp):=\int_Y \xi_Y^0(\vp,y)\nu(dy) \ . \ee
  \be\label{Xi0} \Xi^{0,\nu}_\mu(\vp):= \Xi^0_\mu(\vp) + \Xi^0_\nu(-\vp) \ . \ee
  Thus, $\Xi^{0,\nu}_\mu(\vp)$ is the difference between the total loss of all consumers and  the total profit of all producers, given the prices vector $\vp$. It follows that an equilibrium price vector balancing supply and demand is the one which (somewhat counter-intuitively) {\it maximizes} this difference.
The corresponding  optimal strong $m-$partition represent the matching between producers of  ($B_z\subset Y$) to consumers ($A_z\subset X$) of $z\in Z$.  The introduction of null commodity allows the possibility that only part of the consumer (producers) communities actually consume (produce), that is $\cup_{z\in Z_m} A_z\subset X$ and $\cup_{z\in Z_m} B_z\subset Y$, with $A_0=X-\cup_{z\in Z_m} A_z$ ($B_0=Y-\cup_{z\in Z_m}B_z$) being the set of non-buyers (non-producers).
\par
From the dual point of view, an adaptation  $c^{Z_m}_0(x,y):=\min\{ c^{Z_m}(x,y), 0\}$ of (\ref{sd}) (in the presence of null commodity) is the {\it cost of direct  matching} between producer $y$ and  consumer $x$. The {\it optimal matching} $(A_z, B_z)$ is the one which {\it minimizes} the total cost  $c_0^{Z_m}(\mu,\nu)$  over all  {\it sub-}$m-$partitions $\widehat{\cal P}_X^Y(\mu,\nu)$ as defined in Definition \ref{sdefP}-(ii) with the possible inequality $\mu(\cup A_z)=\nu(\cup B_z)\leq 1$.
\section{Dependence on the sampling set}\label{secmonotone}
So far we took the smapling set $Z_m\subset Z$ to be fixed. Here we consider the effect of optimizing $Z_m$ within the sets of cardinality $m$ in $Z$.
\par
 As we already know from (\ref{interpol}, \ref{sd}), $c^{Z_m}(x,y)\geq c(x,y)$ on $X\times Y$  for any $(x,y)\in X\times Y$ and $Z_m\subset Z$. Hence also $c^{Z_m}(\mu,\nu)\geq c(\mu,\nu)$ for any $\mu,\nu\in {\cal M}_1$ and any $Z_m\subset Z$ as well. An {\it improvement} of $Z_m$ is a new choice $Z_m^{new}\subset Z$ of the {\it same} cardinality $m$ such that $c^{Z_m^{new}}(\mu,\nu)<c^{Z_m}(\mu,\nu)$.
 \par
 In  section \ref{monim} we propose a way to improve a given $Z_m\subset Z$, once the optimal partition is calculated. Of course, the improvement depends on the measure $\mu,\nu$.
 \par
 In section \ref{secass} we discuss the limit $m\rightarrow \infty$ and prove some assymptotic estimates.
\subsection{Monotone improvement}\label{monim}
\begin{proposition}\label{promon}
 Define $\Xi_{\mu, Z_m}^\nu$ on $\R^m$ with respect to $Z_m:= \{z_1, \ldots z_m\}\in Z$ as in (\ref{Xi}). Let $(\vec{\mu}, \vec{\nu})\in {\cal P}_X^Y(\mu,\nu)$ be the optimal partition
 corresponding to $c^{Z_m}(\mu,\nu)$. Let $\zeta(i)\in Z$ be a minimizer of
 \be\label{Znew} Z\ni \zeta \mapsto \int_X c^{(1)}(x,\zeta)\mu_{z_i}(dx) + \int_Y c^{(2)}(\zeta,y)\nu_{z_i}(dy)  \ .  \ee
   Let $Z_m^{new}:= \{ \zeta(1), \ldots \zeta(m)\}$.  Then $c^{Z_m^{new}}(\mu,\nu)\leq c^{Z_m}(\mu,\nu)$.
\end{proposition}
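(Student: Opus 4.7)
The plan is to exploit Lemma \ref{lemapart}, which characterizes $c^{Z_m}(\mu,\nu)$ as the minimum of $\sum_{z\in Z_m}[\int c^{(1)}(x,z)\mu_z(dx) + \int c^{(2)}(z,y)\nu_z(dy)]$ over partitions in ${\cal P}_X^Y(\mu,\nu)$. The key observation is that a partition, viewed as an indexed collection of measures, can be relabeled by \emph{any} $m$-element set without affecting its membership in ${\cal P}_X^Y(\mu,\nu)$; only the cost of evaluating the functional against that partition changes.

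First I would take the optimal partition $(\vec\mu,\vec\nu)\in{\cal P}_X^Y(\mu,\nu)$ for $c^{Z_m}(\mu,\nu)$ given by Lemma \ref{lemapart}, so that
\[
c^{Z_m}(\mu,\nu) = \sum_{i=1}^{m}\Bigl[\int_X c^{(1)}(x,z_i)\mu_{z_i}(dx) + \int_Y c^{(2)}(z_i,y)\nu_{z_i}(dy)\Bigr].
\]
Since $z_i\in Z$ is an admissible candidate in the minimization defining $\zeta(i)$ via (\ref{Znew}), we have pointwise in $i$:
\[
\int_X c^{(1)}(x,\zeta(i))\mu_{z_i}(dx) + \int_Y c^{(2)}(\zeta(i),y)\nu_{z_i}(dy) \leq \int_X c^{(1)}(x,z_i)\mu_{z_i}(dx) + \int_Y c^{(2)}(z_i,y)\nu_{z_i}(dy).
\]
Summing over $i$ yields an evaluation bound against the new centers $\{\zeta(i)\}$.

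Next I would reinterpret the same family $(\vec\mu,\vec\nu)$ as a partition indexed by $Z_m^{new}=\{\zeta(1),\ldots,\zeta(m)\}$, setting $\tilde\mu_{\zeta(i)}:=\mu_{z_i}$ and $\tilde\nu_{\zeta(i)}:=\nu_{z_i}$; this relabeled family still lies in ${\cal P}_X^Y(\mu,\nu)$ with the same $\vec r$. Applying Lemma \ref{lemapart} to $Z_m^{new}$ as an infimum bound,
\[
c^{Z_m^{new}}(\mu,\nu) \leq \sum_{i=1}^{m}\Bigl[\int_X c^{(1)}(x,\zeta(i))\tilde\mu_{\zeta(i)}(dx)+\int_Y c^{(2)}(\zeta(i),y)\tilde\nu_{\zeta(i)}(dy)\Bigr],
\]
and combining with the previous inequality gives $c^{Z_m^{new}}(\mu,\nu)\leq c^{Z_m}(\mu,\nu)$.

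The only subtle point I would anticipate is the possibility that some of the $\zeta(i)$ coincide, so that $|Z_m^{new}|<m$. This causes no difficulty: the definition (\ref{sd}) of $c^{Z_m^{new}}$ depends only on the underlying set and Lemma \ref{lemapart} still applies (duplicate labels in the reindexed partition can be merged by summing their measures without changing the sum), and if one insists on cardinality exactly $m$ one may adjoin arbitrary extra points to $Z_m^{new}$, which can only decrease the minimum in (\ref{sd}). Thus the inequality $c^{Z_m^{new}}(\mu,\nu)\leq c^{Z_m}(\mu,\nu)$ is preserved in all cases. I do not expect any real obstacle here; the proof is essentially a one-step rearrangement exploiting that the partition and the labelling set can be optimized independently.
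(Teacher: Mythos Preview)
Your argument is correct and is essentially the same as the paper's: both rest on Lemma~\ref{lemapart} together with the defining minimality of each $\zeta(i)$ against the fixed optimal partition $(\vec\mu,\vec\nu)$. The paper merely packages the final step through the dual function $\Xi_{\mu,Z_m}^\nu$ via Theorem~\ref{firstdu}, whereas you stay on the primal side and apply Lemma~\ref{lemapart} directly to $Z_m^{new}$; your version is in fact the more transparent of the two, and your remark on possible coincidences among the $\zeta(i)$ is a nice point the paper leaves implicit.
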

\begin{cor}\label{cormon}
Let Assumption \ref{assAB} (a+b), and $\vp_0$ be the minimizer of $\Xi_\mu^{\nu, Z_m}$ in $\R^m$. Let  $\{A_z(\vp_0), B_z(-\vp_0)\}$  be the strong partition corresponding to $Z_m$ as in (\ref{sparAB}). Then the components of  $Z_m^{new}$ are obtained as the minimizers of
 $$ Z\ni\zeta \mapsto \int_{A_z(\vp_0)} c^{(1)}(x,\zeta)\mu(dx) + \int_{B_z(-\vp_0)} c^{(2)}(\zeta,y)\nu(dy) \ . $$
\end{cor}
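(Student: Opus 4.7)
The plan is straightforward: this corollary is just Proposition \ref{promon} rewritten once we know that, under Assumption \ref{assAB} (a+b), the optimal partitions $\vec\mu$ and $\vec\nu$ appearing in Proposition \ref{promon} coincide with the strong partitions induced by $\vp_0$ and $-\vp_0$ respectively. So the whole task is to splice Theorem \ref{strongth} into the statement of Proposition \ref{promon}.

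First I would invoke Theorem \ref{strongth}: under Assumption \ref{assAB} (a+b), there exists a maximizer $\vp_0 \in \R^m$ of $\Xi_{\mu,Z_m}^\nu$, the family $\{A_z(\vp_0)\}_{z \in Z_m}$ is an essentially disjoint partition of $X$ whose $\mu$-restrictions form a strong partition $\vec{\mu}\in \widehat{\cal P}_X^{\vec{r}_0}(\mu)$, and $\{B_z(-\vp_0)\}_{z\in Z_m}$ is an essentially disjoint partition of $Y$ whose $\nu$-restrictions form a strong partition $\vec{\nu}\in \widehat{\cal P}_Y^{\vec{r}_0}(\nu)$, with the common mass vector $\vec{r}_0 = (\mu(A_z(\vp_0)))_z = (\nu(B_z(-\vp_0)))_z$. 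Moreover, Theorem \ref{strongth} asserts that the resulting $(\vec{\mu}, \vec{\nu})$ is the unique optimal partition realizing $c^{Z_m}(\mu,\nu)$ in the sense of Lemma \ref{lemapart}; in particular it is the optimal partition featured in the hypothesis of Proposition \ref{promon}.

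Next I would substitute this identification into (\ref{Znew}). For each $i$, the measure $\mu_{z_i}$ is the restriction of $\mu$ to $A_{z_i}(\vp_0)$, so
\[
\int_X c^{(1)}(x,\zeta)\,\mu_{z_i}(dx) = \int_{A_{z_i}(\vp_0)} c^{(1)}(x,\zeta)\,\mu(dx),
\]
and similarly $\nu_{z_i}$ is the restriction of $\nu$ to $B_{z_i}(-\vp_0)$, so
\[
\int_Y c^{(2)}(\zeta,y)\,\nu_{z_i}(dy) = \int_{B_{z_i}(-\vp_0)} c^{(2)}(\zeta,y)\,\nu(dy).
\]
Therefore the minimization problem (\ref{Znew}) for each $i$ is precisely the minimization of the functional displayed in the corollary (with $z = z_i$), which gives the claimed characterization of the components of $Z_m^{\rm new}$.

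There is no real obstacle here, beyond being careful that the optimal $(\vec\mu,\vec\nu)$ used in Proposition \ref{promon} is the same one produced by Theorem \ref{strongth}; this is guaranteed by the uniqueness clause of that theorem, so the two statements line up without ambiguity.
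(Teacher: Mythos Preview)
Your proposal is correct and is exactly the intended argument: the paper does not give a separate proof of the corollary, treating it as an immediate consequence of Proposition~\ref{promon} once Theorem~\ref{strongth} identifies the optimal partition $(\vec\mu,\vec\nu)$ with the strong partition induced by $\{A_z(\vp_0),B_z(-\vp_0)\}$. You have simply made that implicit step explicit.
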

\begin{proof}(of Proposition \ref{promon}): Let $\Xi_\mu^{\nu, new}$  be defined with respect to $Z_m^{new}$. By Lemma \ref{lemapart} and Theorem \ref{firstdu}
 $\Xi^{\nu, new}_\mu(\vp)\leq \Xi^\nu_\mu(\vp^*):= \max_{\R^m}\Xi_\mu^{\nu, Z_m}$ for any $\vp\in\R^m$,  \\ so  $\max_{\R^m}\Xi_\mu^{\nu,new}(\vp)\equiv c^{Z_m^{new}}(\mu,\nu) \leq \max_{\R^m}\Xi_\mu^{\nu, Z_m}(\vp)\equiv c^{Z_m}(\mu,\nu)$.
\end{proof}
\begin{remark}\label{remmean} If $c$ is a quadratic cost then $z^{new}$ is the center of mass of $A_z(\vp_0)$ and $B_z(-\vp_0)$:
 $$ z^{new}:= \frac{\int_{A_z(\vp_0)}x\mu(dx) + \int_{B_z(-\vp_0)}y\nu(dy)}{\mu(A_z(\vp_0)) + \nu(B_z(-\vp_0))} \ . $$
We shall take advantage of this in section \ref{apqc}. \end{remark}
Let
$$ \underline{c}^m(\mu,\nu):= \inf_{Z_m\subset Z\ ; \ \#(Z_m)=m} c^{Z_m}(\mu,\nu) \ . $$
Let $Z_m^k:=\{ z_1^k, \ldots z_m^k\}\subset Z$ be a sequence of sets such that $z_z^{k+1}$ is obtained from $Z_m^k$ via  (\ref{Znew}). Then by Proposition \ref{promon}
$$ c(\mu,\nu)\leq \underline{c}^m(\mu,\nu) \leq \ldots c^{Z_m^{k+1}}(\mu,\nu)\leq c^{Z_m^{k}}(\mu,\nu) \leq \ldots c^{Z_m^{0}}(\mu,\nu)  \ . $$
{\bf Open problem:} Under which additional conditions one may gurantee
$$ \lim_{k\rightarrow\infty}c^{Z_m^{k}}(\mu,\nu)= \underline{c}^m(\mu,\nu)  \ \ ? $$

\subsection{Assymptotic estimates}\label{secass}
Recall the definition (\ref{cm})
 $$ \phi^m(\mu,\nu):= \inf_{Z_m\subset Z}c^{Z_m}(\mu,\nu)-c(\mu,\nu) \geq 0 \ \ .  $$
 Consider the case  $X=Y=Z=\R^d$ and
 $$c(x,y)=\min_{z\in\R^d}h(|x-z|)+ h(|y-z|)$$
 where $h:\R^+\rightarrow \R^+$ is convex, monotone increasing, twice continuous  differentiable.  Note that $c(x,y) = 2h (|x-y|/2)$.
 \begin{lemma}\label{ssy}
 Suppose both $\mu$ and $\nu$ are supported on in a compact set in $\R^d$. Then there exists $C=C(\mu,\nu)<\infty$  such that
 \be\label{ssyeq} \limsup_{m\rightarrow \infty} m^{2/d}\phi^m(\mu,\nu)\leq C(\mu,\nu) \ . \ee
 \end{lemma}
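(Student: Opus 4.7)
The plan is to use the $c$-optimal coupling to construct an explicit near-optimal sampling set $Z_m$ via quantization of the midpoint distribution, and then bound $c^{Z_m}-c$ pointwise by the squared quantization error. Let $\pi^*\in\Pi_X^Y(\mu,\nu)$ realize $c(\mu,\nu)$. Strict convexity of $h$ implies that $z\mapsto h(|x-z|)+h(|z-y|)$ attains its minimum $c(x,y)=2h(|x-y|/2)$ at the unique midpoint $m_{x,y}:=(x+y)/2$. Let $T:\R^d\times\R^d\to\R^d$ be the midpoint map $T(x,y):=m_{x,y}$ and set $\lambda:=T_\#\pi^*\in{\cal M}_1(\R^d)$, a compactly supported probability measure since $\mu,\nu$ are.

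Enclose $\text{supp}(\lambda)$ in a cube and take $Z_m$ to be the vertices of a regular sub-grid of $\sim m$ equispaced points, so the grid spacing is $O(m^{-1/d})$. Let $\zeta(x,y)\in Z_m$ be a grid point nearest to $m_{x,y}$ and $\delta(x,y):=|\zeta(x,y)-m_{x,y}|$; then $\delta(x,y)\le c_0 m^{-1/d}$ for a constant $c_0$ depending only on the diameter of $\text{supp}(\lambda)$. Setting $f_{x,y}(z):=h(|x-z|)+h(|z-y|)$, we have $\nabla_z f_{x,y}(m_{x,y})=\vo$, so Taylor's theorem gives
\be\label{poinbd} c^{Z_m}(x,y)-c(x,y)\;\le\;f_{x,y}(\zeta(x,y))-f_{x,y}(m_{x,y})\;\le\;\tfrac{1}{2}\sup_{z\in B_{\delta(x,y)}(m_{x,y})}\|\nabla^2 f_{x,y}(z)\|\cdot\delta(x,y)^2. \ee
A direct computation shows the eigenvalues of $\nabla^2_z h(|x-z|)$ are $h''(|x-z|)$ (along $x-z$) and $h'(|x-z|)/|x-z|$ (with multiplicity $d-1$ perpendicular to $x-z$). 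Since $h\in C^2$ and the relevant distances stay in a compact interval controlled by $\text{supp}(\mu)\cup\text{supp}(\nu)\cup\text{supp}(\lambda)$, the supremum in (\ref{poinbd}) is uniformly bounded by some $K_0=K_0(\mu,\nu,h)<\infty$, so $c^{Z_m}(x,y)-c(x,y)\le K_0\,\delta(x,y)^2$ $\pi^*$-a.e.

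Integrating against $\pi^*$ and using $c^{Z_m}(\mu,\nu)\le \int c^{Z_m}\,d\pi^*$,
\be \phi^m(\mu,\nu)\;\le\;c^{Z_m}(\mu,\nu)-c(\mu,\nu)\;\le\;\int[c^{Z_m}-c]\,d\pi^*\;\le\;K_0\!\int\!\delta^2\,d\pi^*\;\le\;K_0 c_0^2 m^{-2/d}, \ee
which gives (\ref{ssyeq}) with $C(\mu,\nu):=K_0 c_0^2$. The main obstacle is the uniform boundedness of the Hessian eigenvalue $h'(|u|)/|u|$ as $|u|\to 0$: this is automatic when $h'(0)=0$ (in particular for the canonical case $h(w)=2^{\sigma-1}w^\sigma$ with $\sigma\ge 2$) since then $h'(w)/w\to h''(0)$; for $h$ with $h'(0)>0$ the Hessian of $f_{x,y}$ is singular at $z=x=y$ and one must separately handle pairs $(x,y)$ close to the diagonal, for example by locally refining the grid on the support of $\mu\wedge\nu$.
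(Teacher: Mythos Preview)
Your argument is correct and follows essentially the same route as the paper: choose a regular grid $Z_m$ covering the relevant compact set, Taylor-expand $f_{x,y}(z)=h(|x-z|)+h(|z-y|)$ at the midpoint $z_0=(x+y)/2$ to get a pointwise bound $c^{Z_m}(x,y)-c(x,y)=O(m^{-2/d})$, and then integrate this against the $c$-optimal plan $\pi^*$ to pass from the pointwise estimate to $\phi^m(\mu,\nu)$. Your introduction of the midpoint push-forward $\lambda=T_\#\pi^*$ is a harmless extra step (the grid could just as well cover the convex hull of $\mathrm{supp}(\mu)\cup\mathrm{supp}(\nu)$), but it does not change the substance.

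If anything, your treatment of the second-order term is \emph{more} careful than the paper's. The paper writes the Taylor expansion as a quadratic only in the $(x-y)$-direction, with coefficient $h''(|x-y|/2)$, and absorbs the rest into a remainder; it then bounds everything by $\sup|h''|$. You correctly compute the full Hessian of $z\mapsto h(|x-z|)$ and observe that the transverse eigenvalue $h'(r)/r$ must also be controlled, which forces $h'(0)=0$ (automatic for $h(w)=2^{\sigma-1}w^\sigma$ with $\sigma\ge 2$, the case the paper singles out immediately after the lemma). Your closing remark about the diagonal case $h'(0)>0$ is a genuine refinement over what the paper states.
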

 \begin{proof}
 By Taylor expansion of $z\rightarrow h(|x-z|)+ h(|y-z|)$ at $z_0=(x+y)/2$ we get
 $$ h(|x-z|)+ h(|y-z|)= 2h(|x-y|/2) + \frac{1}{2|x-y|^2}h^{''}\left(\frac{|x-y|}{2}\right)\left[ (x-y)\cdot(z-z_0)\right]^2+o^2(z-z_0)\ \ . $$
 Let now $Z_m$ be a regular grid of $m$ points which contains the support $K$. The distance between any $z\in K$ to the nearest point in the grid does not exceed $C(K)m^{-1/d}$, for some constant $C(K)$. Hence $c_m(x,y)-c(x,y)\leq \sup |h^{''}| C(K)^2m^{-2/d}$ if $x,y\in K$. Let $\pi_0(dxdy)$ be the optimal plan corresponding to $\mu,\nu$ and $c$. Then, by definition,
 $$ c(\mu,\nu)=\int_X\int_Y c(x,y)\pi_0(dxdy)  \ \ ; \ \  c_m(\mu,\nu)\leq \int_X\int_Y c_m(x,y)\pi_0(dxdy)$$
 so $$\phi^m(\mu,\nu)\leq  \int_X\int_Y (c_m(x,y)-c(x,y))\pi_0(dxdy)\leq \sup |h^{''}| C(K)^2m^{-2/d} \ , $$
 since $\pi_0$ is a probability measure.
 \end{proof}
 If $h(s)=2^{\sigma-1}s^\sigma$ (hence $c(x,y)=|x-y|^\sigma$) then the condition of Lemma \ref{ssy} holds if $\sigma\geq 2$. Note that if $\mu=\nu$ then $c(\mu,\mu)=0$ so $\phi^m(\mu,\mu)=\inf_{Z_m\in Z} c^{Z_m}(\mu,\mu)$.  In that particular case we can improve the result of Lemma \ref{ssy} as follows:

\begin{proposition}\label{prozador}
If $c(x,y)=|x-y|^\sigma$, $\sigma \geq 1$, $X=Y=Z=\R^d$ and  $\nu=\mu=f(x)dx$
\be\label{mquant}  \lim_{m\rightarrow \infty}m^{\sigma/d}\phi^m(\mu,\mu)= C_{d,\sigma}\left(\int f^{d/(d+\sigma)}dx\right)^{(d+\sigma)/d} \ \ee
where $C_{d,\sigma}$ is some universal constant.
\end{proposition}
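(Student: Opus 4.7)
The plan is to reduce $\phi^m(\mu,\mu)$ exactly to the classical optimal $\sigma$-th-order quantization error of $\mu$ on $\R^d$, and then invoke the Zador-Gersho theorem. First I note that $c(x,x)=|x-x|^\sigma=0$, so $c(\mu,\mu)=0$ and hence $\phi^m(\mu,\mu)=\inf_{Z_m\subset\R^d,\,\#Z_m=m}c^{Z_m}(\mu,\mu)$.

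For the reduction I take $c^{(1)}(x,z)=c^{(2)}(z,x):=2^{\sigma-1}|x-z|^\sigma$ as given by (\ref{interpole}). Applying Lemma \ref{lemapart} with $\nu=\mu$, the minimand splits as $F(\vec{\mu})+F(\vec{\nu})$ with $F(\vec{\mu}):=2^{\sigma-1}\sum_{z\in Z_m}\int|x-z|^\sigma d\mu_z(x)$; the only coupling between the two partitions is the requirement that they share a common mass vector $\vec{r}\in\Sigma_m$. Since $\mu=\nu$, the symmetric choice $\vec{\mu}=\vec{\nu}$ is feasible and optimal, yielding
$$ c^{Z_m}(\mu,\mu)=2^\sigma\inf_{\vec{r}\in\Sigma_m}\inf_{\vec{\mu}\in {\cal P}_X^{\vec{r}}(\mu)}\sum_{z\in Z_m}\int_{\R^d}|x-z|^\sigma d\mu_z(x). $$
Taking the union over $\vec{r}\in\Sigma_m$ removes the weight constraint, and the remaining infimum is attained by the Voronoi partition $\mu_z:=\mu|_{A_z(Z_m)}$, $A_z(Z_m):=\{x:|x-z|\leq|x-z'| \ \forall z'\in Z_m\}$, whose masses automatically lie in $\Sigma_m$. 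Consequently
$$ \phi^m(\mu,\mu)=2^\sigma V_m^\sigma(\mu),\qquad V_m^\sigma(\mu):=\inf_{Z_m\subset\R^d,\,\#Z_m=m}\int_{\R^d}\min_{z\in Z_m}|x-z|^\sigma f(x)\,dx. $$

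Next I observe that $V_m^\sigma(\mu)$ is precisely the classical $\sigma$-th-power optimal quantization error of $\mu$. The Zador-Gersho theorem (see e.g.\ Graf and Luschgy, \emph{Foundations of Quantization for Probability Distributions}, Springer 2000, Thm.~6.2) asserts
$$ \lim_{m\to\infty}m^{\sigma/d}V_m^\sigma(\mu)=Q_{d,\sigma}\left(\int_{\R^d}f^{d/(d+\sigma)}\,dx\right)^{(d+\sigma)/d} $$
for every absolutely continuous $\mu=f\,dx$ with finite $(d+\sigma)$-th moment, where $Q_{d,\sigma}>0$ is a universal constant depending only on $d$ and $\sigma$. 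Combining with the reduction above yields the proposition with $C_{d,\sigma}:=2^\sigma Q_{d,\sigma}$.

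The symmetric reduction is essentially bookkeeping once the decoupling of the partition functional and the symmetry $\mu=\nu$ are exploited. The substantive input is Zador's theorem itself, whose proof proceeds by a self-similar scaling argument yielding the constant for the uniform measure on the unit cube, followed by a localization on a dyadic tessellation of $\R^d$ on which $f$ is nearly constant, which converts the universal constant into $\left(\int f^{d/(d+\sigma)}\right)^{(d+\sigma)/d}$. The main conceptual obstacle is therefore already handled in the classical quantization literature; a mild integrability hypothesis (e.g.\ compact support or finite $(d+\sigma)$-moment of $f$) is implicit in the statement of Proposition \ref{prozador}.
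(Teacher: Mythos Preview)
Your proof is correct and arrives at the same identity $c^{Z_m}(\mu,\mu)=2^\sigma\int_{\R^d}\min_{z\in Z_m}|x-z|^\sigma\,\mu(dx)$ before invoking Zador's theorem, but you reach it by the primal route (Lemma~\ref{lemapart}, symmetry, Voronoi), whereas the paper uses the dual route. The paper observes that with $\nu=\mu$ and $c^{(1)}=c^{(2)}$ the function $\Xi_{\mu,Z_m}^\mu(\vp)=\Xi_\mu^{Z_m}(\vp)+\Xi_\mu^{Z_m}(-\vp)$ is \emph{even} and concave, so its maximum is attained at $\vp=0$; Theorem~\ref{firstdu} then gives $c^{Z_m}(\mu,\mu)=\Xi_{\mu,Z_m}^\mu(0)$, which one reads off directly as $2^\sigma\int\min_z|x-z|^\sigma\,d\mu$. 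Your primal argument is perfectly valid and perhaps more self-contained (it does not rely on the duality theorem), but the dual argument is shorter: the evenness-plus-concavity observation replaces your decoupling/symmetry/Voronoi discussion in one line. Either way, the substantive content is Zador's theorem, as you note.
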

\begin{proof}
From (\ref{Xi}), $\Xi_{\mu, Z_m}^\mu(\vp)= \Xi_\mu^{Z_m}(\vp)+\Xi_\mu^{Z_m}(-\vp)$ is an even function. Hence its maximizer must be $\vp=0$. By Theorem \ref{firstdu}
$$\Xi_\mu^{\mu, Z_m}(0)= c^{Z_m}(\mu,\mu) \ . $$
Using (\ref{xi}, \ref{Xi12}) with $c^{(1)}(x,y)=c^{(2)}(y,x)= 2^{\sigma-1}|x-y|^\sigma$ we get
$$ \Xi_\mu^{\mu, Z_m}(0)= 2^\sigma\int_{\R^d}\min_{z\in z_m}|x-z|^\sigma \mu(dx) \ . $$
We then obtain (\ref{mquant}) from Zador's Theorem [\ref{[GL]}, \ref{[Z]}, [\ref{[GL]}].
\end{proof}
Note that Proposition \ref{prozador} does not contradict Lemma \ref{ssy}. In fact $\sigma\geq 2$ it is compatible with the Lemma,  and (\ref{ssyeq}) holds with $C(\mu,\mu)=0$
if $\sigma>2$. If $\sigma\in [1,2)$, however, then the condition of the Lemma is not satisfied (as $h^{''}$ is not bounded near $0$), and the Proposition is a genuine extension of the Lemma, in the particular case $\mu=\nu$.
\par
We can obtain a somewhat sharper result for {\it any} pair $\mu,\nu$ in the case $\sigma=2$, which is presented below.
\par
Let $X=Y=Z=\R^d$,  $c(x,y)=|x-y|^2$, $\mu, \nu$ are Borel probability measures which admits a finite second moment. Assume $\mu$ is asbolutely continuous with respect to Lebesgue measure on $\R^d$. In that case, Brenier Polar factorization Theorem [\ref{[Br]}] implies the existence of a unique solution to the quadratic Monge problem, i.e a Borel mapping  $T$ such that $T_\#\mu=\nu$.  Let $\lambda=f(x)dx$ be the McCann  interpolation between $\mu$ and $\nu$, that is, $\lambda=(I/2+T/2)_\#\mu$. We  know that $\lambda$ is absolutely continuous with respect to Lebesgue as well.
\begin{theorem}
Under the above assumptions,
$$  \limsup_{m\rightarrow \infty}m^{2/d}\phi^m(\mu,\nu)\leq 4C_{d,2}\left(\int f^{d/(d+2)}dx\right)^{(d+2)/d} \ . $$
\end{theorem}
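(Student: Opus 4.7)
The strategy is to take the Brenier optimal plan as a \emph{test plan} for the semi-discrete cost, pointwise compare the integrand via the parallelogram identity, and then recognize what remains as a quantization error for the McCann midpoint measure, to which Proposition \ref{prozador} (Zador) applies directly.

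First, with $c(x,y)=|x-y|^2$ we are in the interpolation (\ref{interpole}) with $p=2$, so $c^{(1)}(x,z)=c^{(2)}(z,y)=2|x-z|^2$ (respectively $2|z-y|^2$) and the optimal intermediate point is the midpoint. Because $\mu\ll dx$, Brenier's theorem produces a unique map $T$ with $T_\#\mu=\nu$, realizing $c(\mu,\nu)=\int|x-T(x)|^2\mu(dx)$, and $\pi_0:=(\mathrm{id},T)_\#\mu\in\Pi_X^Y(\mu,\nu)$. Using $\pi_0$ as an admissible plan for $c^{Z_m}$ gives
\[
c^{Z_m}(\mu,\nu)-c(\mu,\nu)\;\le\;\int_{\R^d}\bigl[c^{Z_m}(x,T(x))-|x-T(x)|^2\bigr]\,\mu(dx).
\]

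Next, for every fixed $x$ set $m(x):=(x+T(x))/2$ and use the parallelogram identity
\[
|x-z|^2+|z-T(x)|^2 \;=\; 2|z-m(x)|^2+\tfrac{1}{2}|x-T(x)|^2.
\]
Multiplying by $2$ and minimizing over $z\in Z_m$ yields
\[
c^{Z_m}(x,T(x))\;=\;|x-T(x)|^2+4\min_{z\in Z_m}|z-m(x)|^2,
\]
so the integrand above equals $4\min_{z\in Z_m}|z-m(x)|^2$. Pushing forward via $m$ and recalling $\lambda=m_\#\mu=f(x)dx$ yields
\[
c^{Z_m}(\mu,\nu)-c(\mu,\nu)\;\le\;4\int_{\R^d}\min_{z\in Z_m}|z-y|^2\,\lambda(dy).
\]
Taking the infimum over $Z_m\subset\R^d$ of cardinality $m$ on both sides gives
\[
\phi^m(\mu,\nu)\;\le\;4\inf_{\#Z_m=m}\int_{\R^d}\min_{z\in Z_m}|z-y|^2\,f(y)\,dy.
\]

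Finally, the right-hand side is, up to the factor $4$, exactly the quantity whose $m^{2/d}$-asymptotics is computed in the proof of Proposition \ref{prozador}: it is the optimal $L^2$-quantization error of $\lambda$ with $m$ centers. Applying Zador's theorem to $\lambda$ (which is absolutely continuous with density $f$ as recalled just before the statement),
\[
\lim_{m\to\infty} m^{2/d}\inf_{\#Z_m=m}\int\min_{z\in Z_m}|z-y|^2 f(y)\,dy \;=\; C_{d,2}\Bigl(\int f^{d/(d+2)}dx\Bigr)^{(d+2)/d},
\]
and multiplying by $4$ delivers the claimed bound. The only substantive point where one has to be careful is the identification of the relevant density: the midpoint measure $\lambda$, and not $\mu$ or $\nu$, is the correct quantization target, and this identification is forced by the parallelogram identity together with Brenier's theorem; absolute continuity of $\lambda$ (needed to invoke Zador in the clean density form) is exactly McCann's displacement-interpolation regularity. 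No further estimates are required.
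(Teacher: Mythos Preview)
Your proof is correct and rests on the same two ingredients as the paper's: the parallelogram identity centered at the midpoint, and Zador's theorem applied to the McCann interpolant $\lambda$. The execution, however, is organized differently. The paper works in the $\lambda$-coordinates: it introduces the optimal map $S:\lambda\to\nu$ (so that $2I-S:\lambda\to\mu$), builds a partition $(\vec\mu,\vec\nu)\in\mathcal{P}_X^Y(\mu,\nu)$ by pushing the Voronoi cells $V_z$ of $Z_m$ forward under $2I-S$ and $S$, and then invokes Lemma~\ref{lemapart} to bound $c^{Z_m}(\mu,\nu)$; the identity it uses, $4|z-x|^2=2\{|S(x)-z|^2+|2x-S(x)-z|^2\}-4|S(x)-x|^2$, is exactly your parallelogram identity with $x$ playing the role of the midpoint of $S(x)$ and $2x-S(x)$.

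Your route is more direct: you stay in the $\mu$-coordinates, use the Brenier plan $\pi_0=(\mathrm{id},T)_\#\mu$ as a test plan in the Kantorovich definition of $c^{Z_m}(\mu,\nu)$, and apply the parallelogram identity pointwise in $(x,T(x))$. This bypasses the auxiliary map $S$ and the partition machinery entirely, and even yields the pointwise \emph{equality} $c^{Z_m}(x,T(x))=|x-T(x)|^2+4\min_{z\in Z_m}|z-m(x)|^2$ rather than an inequality. The paper's detour through Lemma~\ref{lemapart} is consistent with its emphasis on the partition viewpoint, but for this particular estimate your argument is shorter and loses nothing.
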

\begin{proof}
Let $S$ be the optimal Monge mapping transporting $\lambda$ to $\nu$, i.e. $S_\#\lambda=\nu$ is a solution of Monge problem
$$ \int_{\R^d} |S(x)-x|^2\lambda(dx)=\min_{Q; Q_\#\lambda=\nu}\int_{\R^d}|Q(x)-x|^2\lambda(dx)\ . $$
Note that if $y=(T(x)+x)/2$ then $S(y)=T(x)$. Then, since $\lambda=(I/2+T/2)_\#\mu$,
$$ \int_{\R^d} \left|S(y)-y\right|^2 \lambda(dy)=\int_{\R^d}\left| \frac{T(x)-x}{2}\right|^2 \mu(dx) \equiv c(\mu,\nu)/4\ . $$
Also,  if $y=(T(x)+x)/2$ then  $2y-S(y)=x$. If follows that
 $2I-S$ is the optimal Monge mapping  transporting $\lambda$ to $\mu$, that is,
$$ \int_{\R^d} |S(x)-2x|^2\lambda(dx)=\int_{\R^d} |S(x)-x|^2\lambda(dx)=\min_{Q; Q_\#\lambda=\mu}\int_{\R^d}|Q(x)-x|^2\lambda(dx)\  $$
so
\be\label{sumcc}c(\mu,\nu)=2\int_{R^d}|S(x)-x|^2\lambda(dx)+  2\int_{R^d}|S(x)-2x|^2\lambda(dx)= 4\int_{R^d}|S(x)-x|^2\lambda(dx) \ .  \ee

Given $z\in Z_m$, let
\be\label{Vzdef}V_z:= \left\{ x\in \R^d; \ \ |x-z|\leq |x-z^{'}| \ \ \forall z^{'}\in Z_m \right\} \ . \ee
Since $\cup_{z\in Z_m} V_z=\R^d$ and $\lambda(V_z\cap V_{z^{'}})=0$ for $z\not= z^{'}$ then (\ref{sumcc}) implies
\be\label{ss1}c(\mu,\nu)=4\sum_{z\in Z_m}\int_{V_z}|S(x)-x|^2\lambda(dx) \ . \ee
Let $\nu_z:= S_\#\lambda\lfloor V_z$, $\mu_z:= (2I-S)_\#\lambda\lfloor V_z$.
Form Lemma  \ref{lemapart}
$$ c^{Z_m}(\mu,\nu)\leq 2\left(\sum_{z\in Z_m} \int |x-z|^2\mu_z(dx)+ \sum_{z\in Z_m} \int |x-z|^2\nu_z(dx)\right)$$
\be\label{ss2}= 2\sum_{z\in Z_m}\int_{V_z}\left\{|S(x)-z|^2+ |2x-S(x)-z|^2\right\}\lambda(dx) \ee
By the identity
$$ 4 |z-x|^2 =  2\left\{|S(x)-z|^2+ |2x-S(x)-z|^2\right\} - 4|S(x)-x|^2  \ . $$
This, together with (\ref{ss1}, \ref{ss2}) and (\ref{cm}) implies
\be\label{rree} \phi^m(\mu,\nu)\leq 4\sum_{z\in Z_m} \int_{V_z}|x-z|^2 \lambda(dx) \ .  \ee
By (\ref{Vzdef}),
 $\sum_{z\in Z_m}\int_{V_z}|x-z|^2 \lambda(dx)=\int_{\R^d}\min_{z\in Z_m}|x-z|^2\lambda(dx):= \phi(\lambda, Z_m)$. Since (\ref{rree}) is valid for any $Z_m$ we get
the result  from  Zador's Theorem [\ref{[GL]}, \ref{[Z]}, [\ref{[GL]}].
\end{proof}

 \section{Description of the Algorithm}\label{desal}
 We now spell out the proposed algorithm for approximating  of the optimal plan $c(\mu,\nu)$. We assume that $c$ is given by (\ref{interpol}). We fix a large numbers $n_1, n_2$ (not necessarily equal)  which characterizes the fine sampling, and much smaller $m$ characterizing the partition order. Then we choose an appropriate sampling: In $X$ we set  $\mu_{n_1}:= \sum_{i=1}^{n_1} s_i\delta_{x_i}$  for $\mu$ and on $Y$ we set
 $\nu_{n_2}:= \sum_{i=1}^{n_2} \tau_i\delta_{y_i}$ for $\nu$.
 \par
 At the first  stage we  choose $Z^{(0)}:= \{z^0_1, \ldots z^0_m\}\in Z^m$, and define
 $$ \Xi_0(\vp):= \sum_{i=1}^{n_1} s_i\min_{1\leq j\leq m}[ c^{(1)}(x_i, z^0_j) + p_j]+  \sum_{i=1}^{n_2} \tau_i\min_{1\leq j\leq m}[ c^{(2)}( z^0_j,y_i) - p_j]
 $$
Next we choose a favorite method to maximize $\Xi_0$ on $\R^m$. It is helpful to observe that $\Xi_0$ is differentiable a.e. on $\R^m$. Indeed,
let
 $$ A^0_j(\vp):= \{ i\in (1, \ldots n_1), \ c^{(1)}(x_z, z^0_j) + p_j = \min_{1\leq k\leq m}[ c^{(1)}(x_z, z^0_k) + p_k]\}$$
  $$ B^0_j(\vp):= \{ i\in (1, \ldots n_2), \ c^{(2)}( z^0_j, y_z) + p_j = \min_{1\leq k\leq m}[ c^{(2)}( z^0_k, y_z) + p_k]\} \ . $$
Then
$$ \frac{\partial \Xi_0}{\partial p_j} = \sum_{i\in A_j(\vp)} s_z-\sum_{i\in B_j(-\vp)} \tau_z$$
provided $A_j(\vp)\cap A_k(\vp)=\emptyset$ and $B_j(-\vp)\cap B_k(-\vp)=\emptyset$ for any $k\not= j$.
\par
Let $\vp_0$ be the maximizer of $\Xi_0$ on $\R^m$, $A^0_j:= A^0_j(\vp_0)$, $B^0_j:= B^0_j(-\vp_0)$.
\par
At the $l$ step we are given $Z^{(l)}:= \{z^l_1, \ldots z^l_m\}\in Z^m$,  $\vp_l$ the maximizer of
 $$ \Xi_l(\vp):= \sum_{i=1}^{n_1} s_z\min_{1\leq j\leq m}[ c^{(1)}(x_z, z^l_j) + p_j]+  \sum_{i=1}^{n_2} \tau_z\min_{1\leq j\leq m}[ c^{(2)}( z^l_j, y_z) - p_j]
 $$
 and the corresponding $A^l_j:= A^l_j(\vp_l)$, $B^l_j:= B^l_j(-\vp_l)$ where
 $$ A^l_j(\vp):= \{ i\in (1, \ldots n_1), \ c^{(1)}(x_z, z^l_j) + p_j = \min_{1\leq k\leq m}[ c^{(1)}(x_z, z^l_k) + p_k]\}$$
  $$ B^l_j(\vp):= \{ i\in (1, \ldots n_2), \ c^{(2)}(z^l_j, y_z) + p_j = \min_{1\leq k\leq m}[ c^{(2)}( z^l_k, y_z) + p_k]\} \ . $$
 We define $z^{l+1}_j$ as the minimizer of
 \be\label{newc} \zeta \mapsto \sum_{i\in A^l_j} s_z c^{(1)}(x_z, z) + \sum_{i\in B^l_j} \tau_z c^{(2)}( z, y_z)  \ee
 and set  $Z^{(l+1)}:= \{z^{l+1}_1, \ldots z^{l+1}_m\}\in Z^m$. Now
 $$ \Xi_{l+1}(\vp):= \sum_{i=1}^{n_1} s_z\min_{1\leq j\leq m}[ c^{(1)}(x_z, z^{l+1}_j) + p_j]+  \sum_{i=1}^{n_2} \tau_z\min_{1\leq j\leq m}[ c^{(2)}( z^{l+1}_j, y_z) - p_j] \ .
 $$
 From these we evaluate the maximizer $\vp_{l+1}$ the maximizer of $\Xi_{l+1}$ and the sets $A_j^{l+1}$, $B_j^{l+1}$.
 \begin{remark}
 The maximizer $\vp_l$ at the $l$ stage can be used as an initial guess for calculating the maximizer $\vp_{l+1}$ at the next stage. This can save a lot of iterations where the stages where changes of the centers $Z^{(l)}\rightarrow Z^{(l+1)}$ is small.
 \end{remark}
 Using Proposition \ref{promon} and Corollary \ref{cormon} we obtain a monotone non increasing sequence
 $$   \Xi_0(\vp_0)\geq \ldots \geq \Xi_{l}(\vp_{l}) \geq \Xi_{l+1}(\vp_{l+1})  \ldots \geq c(\mu, \nu) \ . $$
 The iterations stop when this  sequence saturate, according to a pre-determined criterion.
  \subsection{Application for quadratic cost}\label{apqc}
As a demonstration, let us consider the special (but interesting) case of quadratic cost function $c(x,y)=|x-y|^2$ on Euclidean space $X=Y$.
We observe the trivial inequality   $|x-y|^2=2\min_{z\in X}[|x-z|^2+|y-z|^2]$. Hence we may approximate $|x-y|^2$ by
\be\label{c0}c^{Z_m}(x,y):= 2\min_{z\in Z_m}[|x-z|^2+|y-z|^2] \geq |x-y|^2\ \  . \ee
 So, we use  $c^{(1)}(x, z):= 2|x-z_z|^2, c^{(2)}( z_z,y):= 2|y-z_z|^2$.
 \par
 The updating (\ref{newc}) takes now a simpler form due to Remark \ref{remmean}. Indeed, $z^{l+1}_j$ is nothing but the center of mass
 $$ z_j^{l+1}=\frac{\sum_{i\in A_j^l} s_zx_z + \sum_{i\in B_j^l} \tau_zy_z}{\sum_{i\in A_j^l} s_z + \sum_{i\in B_j^l} \tau_z}\ .  $$
 \section{Some experiments with quadratic cost on the plane}\label{yifat}
 \begin{figure}[p]
    \centering
    \includegraphics[width=7cm]{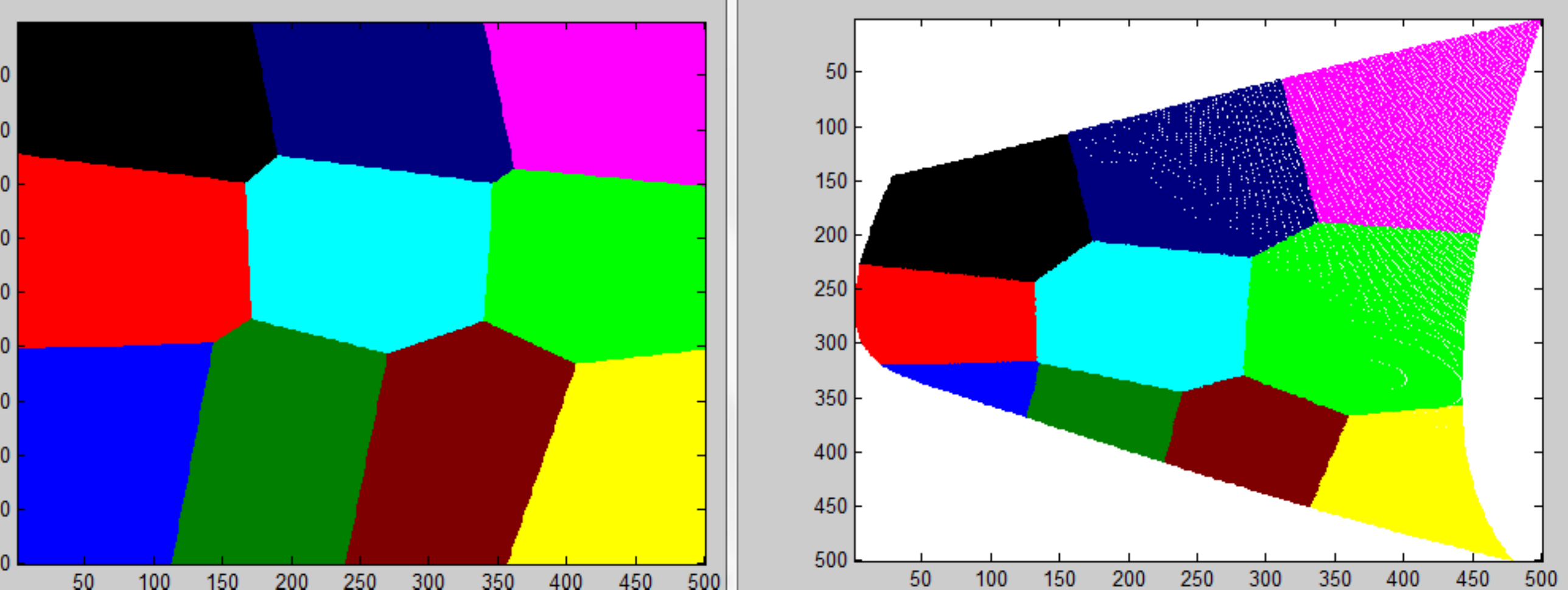}
       \caption{Partition for $\Phi$, $\lambda=0.2$ }
       \label{1p6}
\end{figure}

\begin{figure}[p]
   \centering
    \includegraphics[width=7cm]{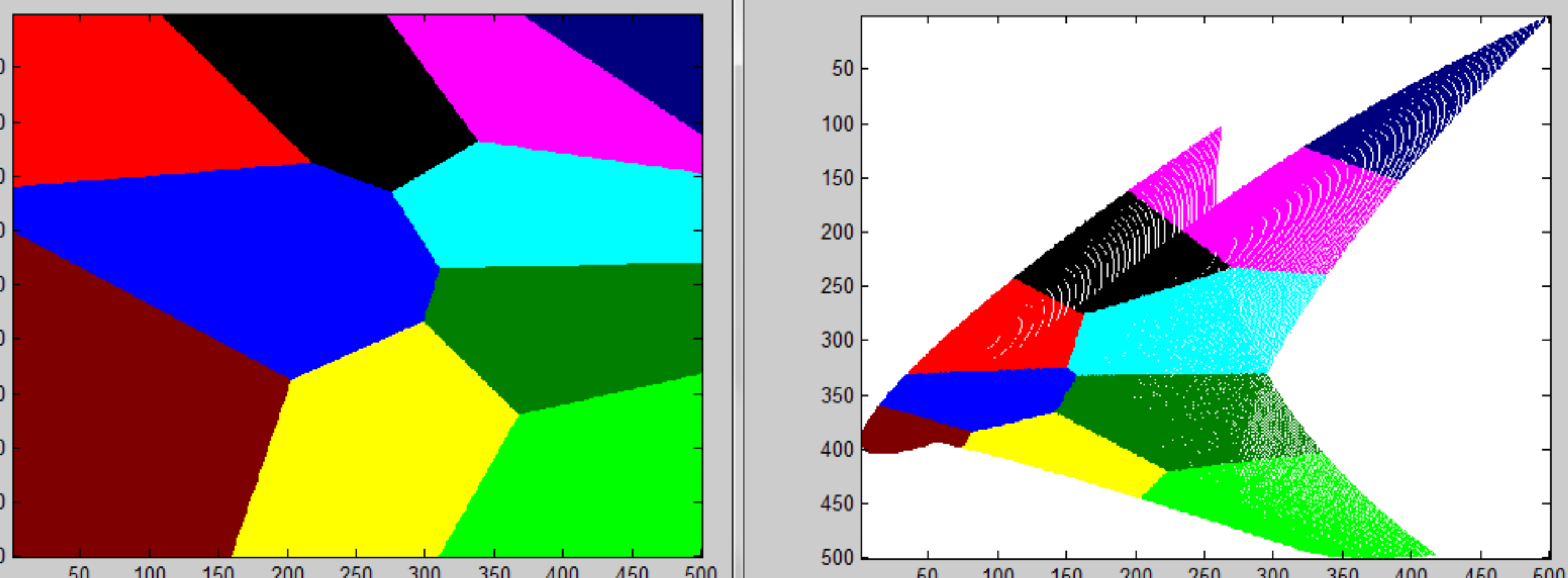}
       \caption{Partition for $\Phi$, $\lambda=1.2$}
       \label{4p7}
      \end{figure}

 \begin{figure}[p]
    \centering
    \includegraphics[width=7cm]{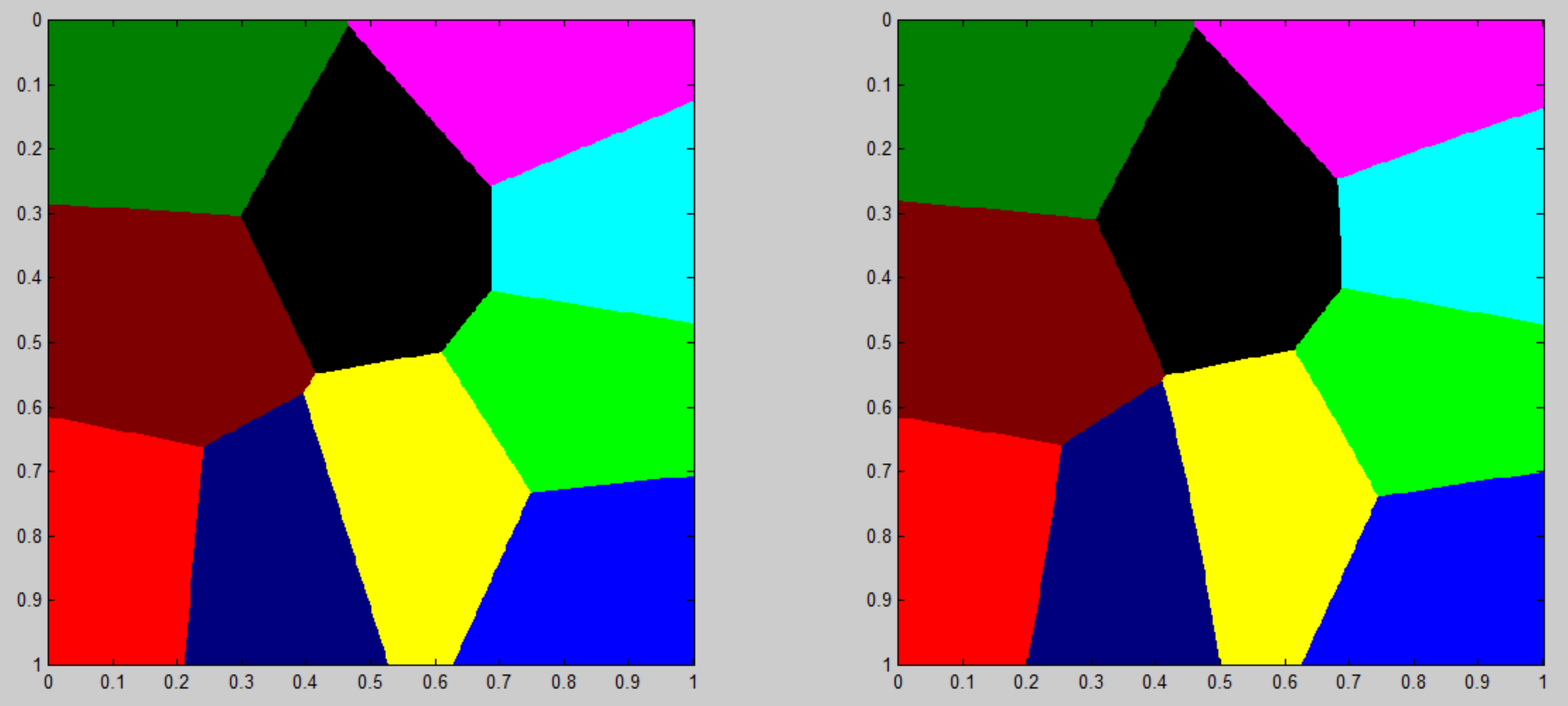}
       \caption{Comparison of partitions $\lambda=0.05$}
       \label{47}
\end{figure}
\begin{figure}[p]
    \centering
    \includegraphics[width=7cm]{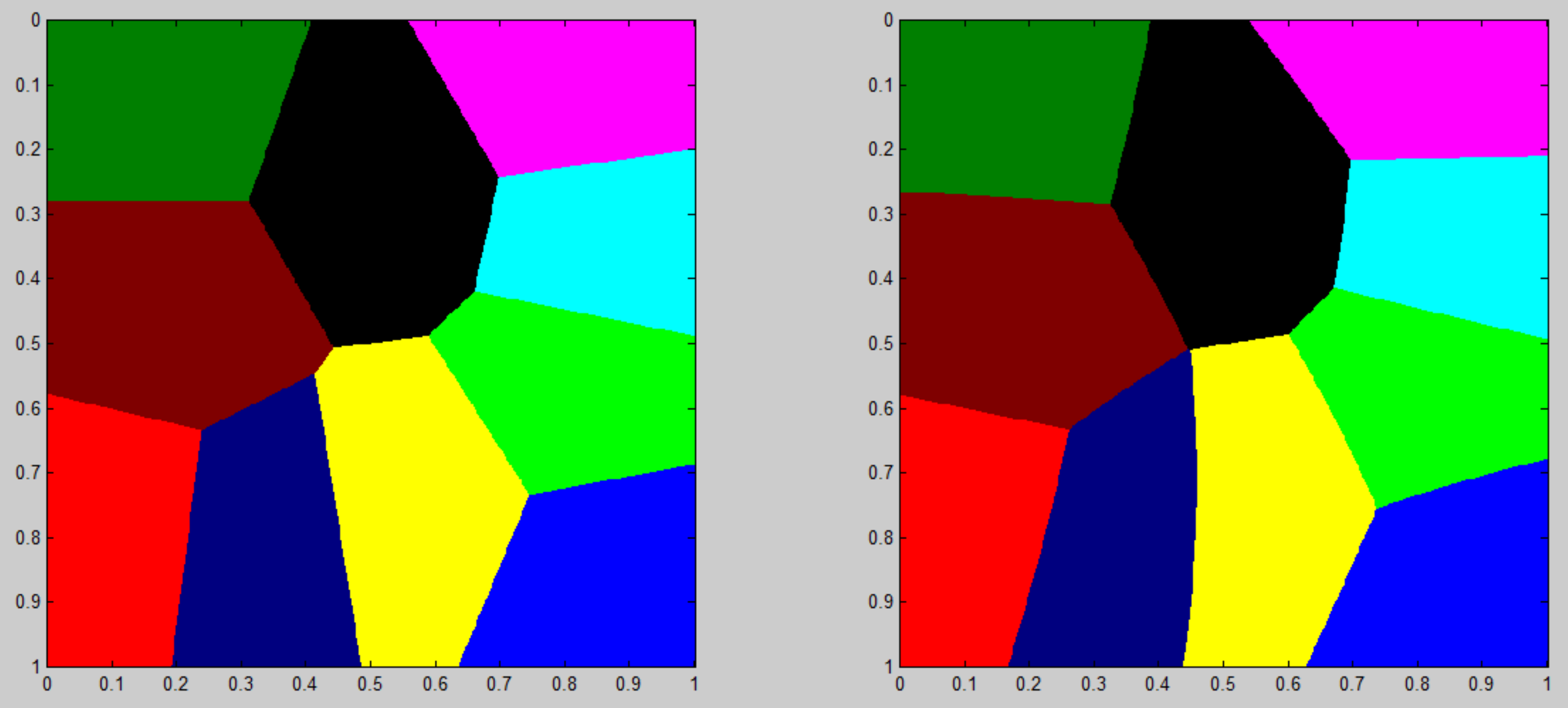}
       \caption{Comparison of partitions $\lambda=0.1$}
       \label{48}
\end{figure}

\begin{figure}[p]
    \centering
    \includegraphics[width=7cm]{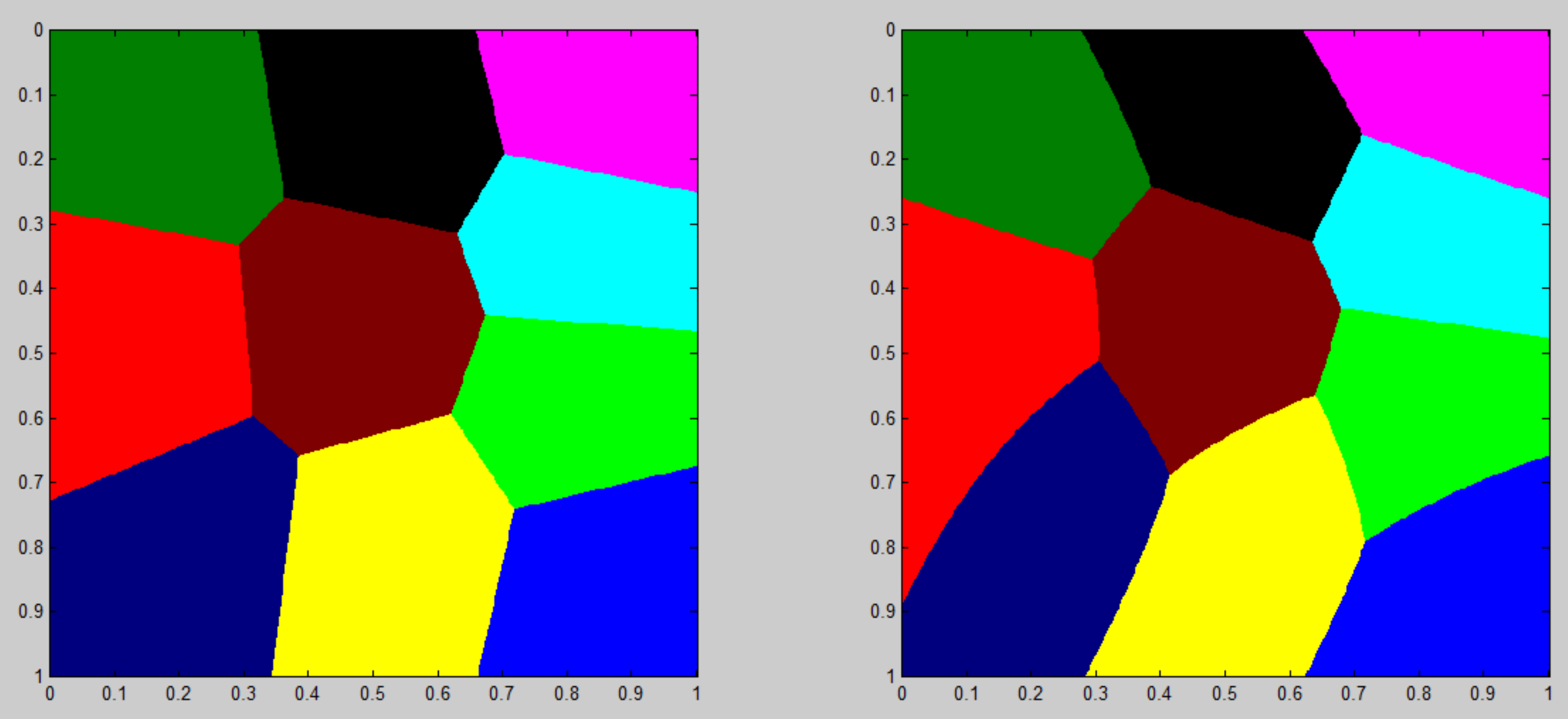}
       \caption{Comparison of partitions $\lambda=0.2$}
       \label{49}
\end{figure}

\begin{figure}[p]
    \centering
    \includegraphics[width=7cm]{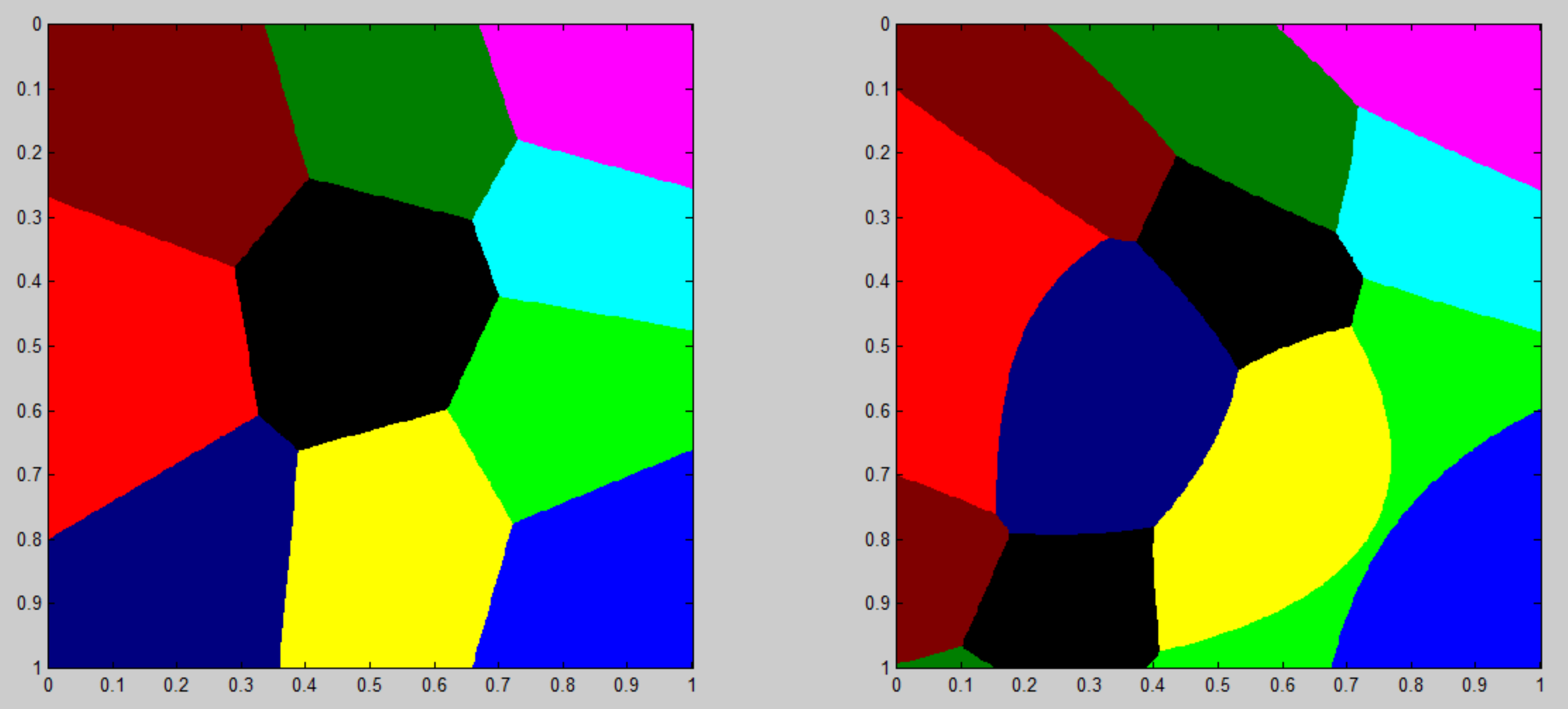}
       \caption{Comparison of partitions $\lambda=0.5$}
       \label{50}
\end{figure}

%
%
%
%
%
%
%
%
%
%
%
%

 In this section we demonstrate the algorithm for quadratic cost. The pair $(\mu,X)$ is always considered to be uniform Lebesgue measure  on the unit square $B:=\{(x_1, X_2); \ 0\leq x_1, x_2)\leq 1\}$. It is sampled by an empiric measure of regular grid composed on 400 points $x^{(i)}_1=i/20, x_2^{(j)}=j/20$,  $\mu(\{i/20, j/20)\})=1/400$, \ $1\leq i,j\leq 20$. The image space  $(Y,\nu)$ is, again, a probability measure on the plane  which depends on the particular experiment. The number of centers $m=10$ and their initial choice is arbitrary within the unit square.
 \par
 In the first experiments we used a given mapping $T:= (T_1, T_2):B\rightarrow \R^2$, and defined $(Y,\nu)$ according to $Y=T(B)$, $\nu=T_\#\mu$. In that case the naturel sampling is just $(y^{(i)}_1, y^{(j)}_2)=(T_1(i/20), T_2(j/20))$, and $\nu(\{(y^{(1)}_z, y^{(2)}_j)\})=1/400$.

 In all these experiment we used $T_k=\partial\Phi/\partial x_k$, $k=1,2$, where
   $\Phi(x_1, x_2) = 0.5 (x_1^2+x_2^2) + \lambda(\cos(x_1+2x_2)-\sin(x_1-x_2))$. Figs.\ref{1p6}-\ref{4p7} shows the saturated result for different values of $\lambda$.
   \par
   Fig \ref{47}-\ref{52} show pair of partitions on the $X$ square. The right  square is the image under $(\nabla\Phi)^{-1}$ of the partition in the left square. Note that for small values of $\lambda$ the two partitions  looks identical. This is, in fact, what we expect as long as $\Phi$ is a convex function. Indeed, the celebrated Brenier's theorem of polar factorization [\ref{[Br]}] implies just this! For larger values of $\lambda$, $\Phi$ is not convex and we see clearly the difference between these two partitions.
 \par
 In the second class of experiments we used different   domains for $Y$ (e.g. $T$ shaped, $I$ shaped and $A$ shaped) which are not induced by a mapping.   Fig. \ref{20p12}-\ref{15p10} display the induced partitions after saturation for different initial choices of the centers $z_z$. It demonstrates that the saturated partition may depend on the initial choice of the centers.

\section{Comparison with other semi discrete algorithms}\label{compare}
 Applications of semi-discrete methods  for numerical algorithms  where introduced in paper by   M\'{e}rigot [\ref{Me}], followed by a paper of L\'{e}vy [\ref{Levi}]. Here we indicate the similar and different aspects of our proposed algorithm, compared to [\ref{Me}, \ref{Levi}].
 \par
 The starting point of  M\'{e}rigot-L\'{e}vy algorithm for quadratic cost involves a discretization  $\nu_m$ of the target measure $\nu$. For $\nu_m=\sum_1^m r_i\delta_{y_i}$, the optimal plan for transporting $\mu$ is obtained by maximizing
 \be\label{ML} \R^m\ni\vp \mapsto \int\min_{1\leq i\leq m} [|x- y_i|^2+p_i] \mu(dx) - \sum_i^mr_ip_i \ . \ee
 This is equivalent to the function we defined (for the special case of quadratic cost) as $\Xi_\mu^{Z_m}(\vp)-\vp\cdot\vec{r}$, whose maximum over $\R^m$ is
$(-\Xi_\mu^{Z_m})^*(-\vec{r})$ as defined in (\ref{Xi*mu}). The optimal partition induced by maximizing (\ref{ML}) is refined by taking finer and finer discretization of $\nu$ with increasing number of points $m$. The multi-grid method is, essentially, using the data of the maximizer $\vp$ corresponding to $\nu_m$ as an initialization for the   $m+1$ level maximization corresponding to (\ref{ML}).
\par
In the present paper we take a different approcah, namely the  semi-discretization of the {\it cost function} $c=c(x,y)$ via  (\ref{sd}). It is, in fact, equivalent to a two sided discretization analogus to (\ref{ML}) (in the quadratic case), as we can observe from (\ref{c=Xi}). However, by carrying the duality method one step forward we could  reduce the optimization problem to a single one over $\R^m$ via Theorem \ref{firstdu}.

\newpage

\begin{center}{\bf References}\end{center}
\begin{enumerate}
\item\label{[A1]} L. Ambrosio: {\it
Lecture notes on optimal transport problems
Mathematical Aspects of Evolving Interfaces},  Lecture Notes in Math., Funchal, 2000, vol. 1812, Springer-Verlag, Berlin (2003), pp. 1-52
\item\label{[BC]}   Bauschke, H.H and  Combettes, P.L: {\it  Convex Analysis and Monotone Operator Theory in Hilbert Spaces}, Springer, (2011).
\item \label{[Br]} Y. Brenier: {\it Polar factorization and monotone
rearrangement of vector valued  functions}, Arch. Rational Mech
\item\label{Levi} Bruno L\'{e}vy: {\it  A numerical algorithm  for $L_2$ semi-discrete optimal transport in  3D}, arXiv:1409.1279v1  [math.AP]  3 Sep 2014
\item\label{[B]} Bogachev, V.I {\it Measure Theory}, Springer 2007
\&Anal., {\bf 122}, (1993), 323-351.
 \item\label{Cap1} Caffarelli, L; Gonz\'{a}lez, M and   Nguyen, T.: {\it A perturbation argument for a Monge-Amp\'{e}re type equation arising in optimal transportation},  Arch. Ration. Mech. Anal. 212 (2014), no. 2, 359-414
\item\label{[Hed]}
Chiappori, Pierre-Andr\'{e}, Robert J. McCann, and Lars P. Nesheim. "Hedonic price equilibria, stable matching, and optimal transport: equivalence, topology, and uniqueness." Economic Theory 42.2 (2010): 317-354
\item\label{[AF]}  Frank, A:
{On Kuh\'{n}s Hungarian method-a tribute from Hungary},
Naval Research Logistics, 52 (1) (2005), 2-6
\item\label{GO} Glimm, T., and V. Oliker. {\it Optical design of single reflector systems and the Monge-Kantorovich mass transfer problem},  Journal of Mathematical Sciences 117.3 (2003): 4096-4108
\item\label{[GL]} Graf, S and Luschgy, H.: {\it Foundations of Quantization for Probability Distributions}, Lect. Note Math. 1730, Springer, (2000)
\item\label{[HWK]} Kuhn, H.W.:
{\it The Hungarian method for the assignment problem},
Naval Research Logistics Quarterly, 2 (1,2) (1955), 83-97
 \item\label{[HWK1]}  Kuhn, H.W:
{\it Statement for Naval Research Logistics},
Naval Research Logistics, 52 (1) (2005), p. 6
 \item\label{[Kan]}    Kantorovich, L:  {\it On the translocation of
masses}, C.R (Doclady) Acad. Sci. URSS (N.S), 37, (1942), 199-201
\item \label{Me}  M\'{e}rigot, Q.: {\it  A multiscale approach to optimal transport},  Computer Graphics Forum, Wiley-Blackwell, 2011, 30 (5), pp.1584-1592.
\item\label{mon} Monge,G:\ {\it M\'{e}moire sur la th\'{e}orie des d\'{e}blais et des remblais}, In  Histoire de l\'{A}cad\'{e}mie Royale des Sciences de Paris,  666-704, 1781
\item  \label{[JM]}  Munkres, J:: {\it Algorithms for the assignment and transportation problems}, Jo Soc. IDUST. APPL. HATtt.
Vol. 5, No. 1, March, 1957
\item\label{PCK} Papadimitriou, C. H and Kenneth S:. {\it Combinatorial optimization: algorithms and complexity}. Courier Dover Publications, 1998
\item \label{[PD]} Pentico, D. W: {\it Assignment problems: a golden anniversary survey}, European J. Oper. Res. 176 (2007), no. 2, 774-793.

\item\label{Ru1}  Rubinstein, J.,  Wolansky, G.: {\it Intensity control with a free-form lens},  J. Opt. Soc. Amer. A 24 (2007), no. 2, 463-469
 \item\label{Ru2} Rubinstein, J., Wolansky, G.: {\it A weighted least action principle for dispersive waves},  Ann. Physics 316 (2005), no. 2, 271-284
\item\label{[RYT]} Rubner, Y.,  Tomasi, C. and Guibas, L.: {\it The Earth Mover's Distance as a Metric for Image Retrieval}, International Journal of Computer Vision, {\bf 40}, 2,  99-121 (2000),

\item \label{[V1]}  Villani, C:  {\it Topics in Optimal Transportation}, A.M.S Vol 58, 2003
\item\label{[VO]}  Votaw, D.F and  Orden, A: {\it The personnel assignment problem}, Symposium on Linear Inequalities and Programmng, SCOOP 10, US Air Force, 1952, 155-163
\item \label{[W]} Wolansky, G.: {\it On Semi-discrete Monge Kantorovich and Generalized Partitions}, to appear in JOTA



    \item\label{[Z]} P.L. Zador, {\it Asymptotic quantization error of continuous signals and the quantization dimension},  IEEE Trans. Inform. Theory
28, Special issue on quantization, A. Gersho \& R.M. Grey Eds. (1982) 139-149.
\end{enumerate}
\end{document}